\numberwithin{equation}{section}
\numberwithin{figure}{section}
\theoremstyle{plain}
\newtheorem*{thm*}{\protect\theoremname}
\theoremstyle{plain}
\newtheorem{thm}{\protect\theoremname}[section]
\theoremstyle{plain}
\newtheorem{lem}[thm]{\protect\lemmaname}
\theoremstyle{plain}
\newtheorem{cor}[thm]{\protect\corollaryname}
\theoremstyle{remark}
\newtheorem{rem}[thm]{\protect\remarkname}
\theoremstyle{plain}
\newtheorem{prop}[thm]{\protect\propositionname}
\providecommand{\corollaryname}{Corollary}
\providecommand{\lemmaname}{Lemma}
\providecommand{\propositionname}{Proposition}
\providecommand{\remarkname}{Remark}
\providecommand{\theoremname}{Theorem}
\begin{document}
\global\long\def\CC{\mathbb{C}}%
\global\long\def\norm#1{\left\Vert #1\right\Vert }%
\global\long\def\sq{\subseteq}%
\global\long\def\HHH{\mathcal{H}}%
\global\long\def\st{\;|\;}%
\global\long\def\inj{\hookrightarrow}%
\global\long\def\ph{\varphi}%
\global\long\def\RR{\mathbb{R}}%
\global\long\def\NN{\mathbb{N}}%
\global\long\def\U{\mathcal{U}}%
\global\long\def\lim#1{\underset{#1}{lim}}%
\global\long\def\ZZ{\mathbb{Z}}%
\global\long\def\FF{\mathbb{F}}%
\global\long\def\frakm{\mathfrak{m}}%
\global\long\def\lto{\longrightarrow}%
\global\long\def\conv#1{\underset{#1}{\lto}}%
\global\long\def\mf#1{\mathfrak{#1}}%

\title{Uniform rank-metric stability of Lie algebras and groups}
\author{Benjamin Bachner}
\begin{abstract}
We study uniform stability of discrete groups, Lie groups and Lie
algebras in the rank metric, and the connections between uniform stability
of these objects. We prove that semisimple Lie algebras are far from
being flexibly $\CC$-stable, and that semisimple Lie groups and lattices
in semisimple Lie groups of higher rank are not strictly $\CC$-stable.
Furthermore, we prove that free groups are not uniformly flexibly
$F$-stable over any field $F$.
\end{abstract}

\maketitle

\section{Introduction}

The notion of homomorphism stability is the general problem of whether
almost homomorphisms are close to genuine homomorphisms, and is usually
referred to as Ulam stability \cite{Ulam}. This property depends
on the objects in question (usually groups or algebras), and on the
precise meaning of almost homomorphisms and distances between them,
as well as the chosen metric. A theorem of Kazhdan gives such a result
for unitary almost representations in the operator norm:
\begin{thm*}
(Kazhdan 1982 \cite{KazhdanStability})

Let $\Gamma$ be an amenable group and $\ph:\Gamma\to U\left(\mathcal{H}\right)$
be a function into the unitary operators on a Hilbert space $\mathcal{H}$.
If $\norm{\ph\left(\gamma_{1}\gamma_{2}\right)-\ph\left(\gamma_{1}\right)\ph\left(\gamma_{2}\right)}_{op}\le\delta$
for all $\gamma_{1},\gamma_{2}\in\Gamma$ and $\delta\le\frac{1}{200}$,
then there exists a representation $\psi:\Gamma\to U\left(\mathcal{H}\right)$
such that $\norm{\ph\left(\gamma\right)-\psi\left(\gamma\right)}_{op}\le2\delta$
for all $\gamma\in\Gamma$.
\end{thm*}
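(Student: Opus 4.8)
The plan is to repair $\ph$ into a genuine representation by a rapidly converging iteration of averaging steps, each of which uses amenability to cancel the failure of multiplicativity \emph{to first order}, so that the residual error gets squared at every stage. For a map $\Phi\colon\Gamma\to U(\mathcal{H})$ write $\varepsilon(\Phi):=\sup_{\gamma_{1},\gamma_{2}}\norm{\Phi(\gamma_{1}\gamma_{2})-\Phi(\gamma_{1})\Phi(\gamma_{2})}_{op}$ and put $u_{\Phi}(\gamma,x):=\Phi(\gamma x)\Phi(x)^{-1}$. Two elementary facts will drive everything. First, the \emph{exact} cocycle identity $u_{\Phi}(\gamma_{1}\gamma_{2},x)=u_{\Phi}(\gamma_{1},\gamma_{2}x)\,u_{\Phi}(\gamma_{2},x)$ holds for every $\Phi$, by cancellation. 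Second, $\norm{u_{\Phi}(\gamma,x)-\Phi(\gamma)}_{op}=\norm{\Phi(\gamma x)-\Phi(\gamma)\Phi(x)}_{op}\le\varepsilon(\Phi)$ for all $x$; so when $\varepsilon(\Phi)<1$ the unitary $w_{\Phi}(\gamma,x):=\Phi(\gamma)^{-1}u_{\Phi}(\gamma,x)$ lies within $\varepsilon(\Phi)$ of the identity and thus has a skew-adjoint principal logarithm with $\norm{\log w_{\Phi}(\gamma,x)}_{op}\le\log\frac{1}{1-\varepsilon(\Phi)}$. I would fix a left-invariant mean $m$ on $\Gamma$ (which exists since $\Gamma$ is amenable) and normalize $\ph$ so that $\ph(e)=I$ (harmless, as $\norm{\ph(e)-I}_{op}\le\delta$).

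\textbf{The averaging step.} Given $\Phi$ with $\varepsilon(\Phi)<1$, define $\Phi^{\#}\colon\Gamma\to U(\mathcal{H})$ by $\Phi^{\#}(\gamma):=\Phi(\gamma)\exp(Y_{\gamma})$, where $Y_{\gamma}\in B(\mathcal{H})$ is the weak mean of $x\mapsto\log w_{\Phi}(\gamma,x)$, i.e.\ $\langle Y_{\gamma}v,w\rangle:=m_{x}\langle(\log w_{\Phi}(\gamma,x))v,w\rangle$. Each $\log w_{\Phi}(\gamma,x)$ is skew-adjoint, hence so is $Y_{\gamma}$, hence $\exp(Y_{\gamma})$ and $\Phi^{\#}(\gamma)$ are unitary: \emph{unitarity is preserved}. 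Also $\norm{Y_{\gamma}}_{op}\le\log\frac{1}{1-\varepsilon(\Phi)}$, so $\norm{\Phi^{\#}(\gamma)-\Phi(\gamma)}_{op}=\norm{\exp(Y_{\gamma})-I}_{op}\le e^{\norm{Y_{\gamma}}_{op}}-1\le\frac{\varepsilon(\Phi)}{1-\varepsilon(\Phi)}$. The crucial point is $\varepsilon(\Phi^{\#})=O(\varepsilon(\Phi)^{2})$. Indeed $\Phi^{\#}(\gamma)=\Phi(\gamma)\exp(Y_{\gamma})$ agrees with the naive weak mean $A(\gamma):=m_{x}[u_{\Phi}(\gamma,x)]=\Phi(\gamma)\,m_{x}[w_{\Phi}(\gamma,x)]$ up to $O(\varepsilon(\Phi)^{2})$, because $\exp(Y_{\gamma})$ and $m_{x}[w_{\Phi}(\gamma,x)]$ both equal $I+Y_{\gamma}$ to that order; and since the defect is quadratic in the map, it suffices to show $\varepsilon(A)=O(\varepsilon(\Phi)^{2})$. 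For that, use the cocycle identity to write $A(\gamma_{1}\gamma_{2})=m_{x}[u_{\Phi}(\gamma_{1},\gamma_{2}x)\,u_{\Phi}(\gamma_{2},x)]$ and expand $u_{\Phi}(\gamma_{1},\gamma_{2}x)=A(\gamma_{1})+R(x)$, $u_{\Phi}(\gamma_{2},x)=\Phi(\gamma_{2})+S(x)$ with $\norm{R(x)}_{op}\le2\varepsilon(\Phi)$ and $\norm{S(x)}_{op}\le\varepsilon(\Phi)$. The two first-order terms disappear on averaging: $m_{x}[R(x)]=m_{x}[u_{\Phi}(\gamma_{1},\gamma_{2}x)]-A(\gamma_{1})=0$ by \emph{left-invariance of} $m$ (substitution $x\mapsto\gamma_{2}x$), and $m_{x}[A(\gamma_{1})S(x)]=A(\gamma_{1})\,m_{x}[S(x)]=0$ since $A(\gamma_{1})$ is constant in $x$ and $m_{x}[u_{\Phi}(\gamma_{2},x)]=A(\gamma_{2})$. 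What remains is $A(\gamma_{1}\gamma_{2})-A(\gamma_{1})A(\gamma_{2})=m_{x}[R(x)S(x)]$, of norm $\le2\varepsilon(\Phi)^{2}$, so $\varepsilon(A)\le2\varepsilon(\Phi)^{2}$.

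\textbf{Iteration and conclusion.} Set $\ph_{0}:=\ph$, $\ph_{n+1}:=\ph_{n}^{\#}$, $\varepsilon_{n}:=\varepsilon(\ph_{n})$. Then $\varepsilon_{0}=\delta$, every $\ph_{n}$ is unitary, $\norm{\ph_{n+1}(\gamma)-\ph_{n}(\gamma)}_{op}\le\frac{\varepsilon_{n}}{1-\varepsilon_{n}}$, and $\varepsilon_{n+1}\le C\varepsilon_{n}^{2}$ for an absolute constant $C$ once $\delta$ is small enough that all iterates stay in a fixed neighbourhood of $\ph$. Hence $\varepsilon_{n}$ decays doubly exponentially, $\sum_{n\ge0}\norm{\ph_{n+1}-\ph_{n}}_{op}$ converges and is dominated by its first term $\le\frac{\delta}{1-\delta}$ up to a negligible tail, and a careful count shows the whole sum is $\le2\delta$ once $\delta\le\frac{1}{200}$. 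Therefore $\ph_{n}$ converges uniformly in operator norm to some $\psi\colon\Gamma\to U(\mathcal{H})$ with $\varepsilon(\psi)=0$ — a genuine unitary representation — satisfying $\norm{\psi(\gamma)-\ph(\gamma)}_{op}\le2\delta$ for all $\gamma$, which is the assertion.

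\textbf{Main obstacle.} The conceptual heart — the quadratic gain $\varepsilon(\Phi^{\#})=O(\varepsilon(\Phi)^{2})$ — is clean and uses only the exact cocycle identity together with left-invariance of $m$, and working inside the Banach--Lie group $U(\mathcal{H})$ via $\exp$ and $\log$ keeps every iterate unitary, so no final unitarization is needed. The real work, where I expect the difficulty to concentrate, is the \emph{quantitative} bookkeeping: making $C$ explicit, controlling the exponential/logarithm remainders and the accumulated displacement uniformly along the iteration, and checking that under $\delta\le\frac{1}{200}$ all of this fits inside the essentially sharp bound $2\delta$, with no slack to be wasteful.
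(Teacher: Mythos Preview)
The paper does not give a proof of this theorem: it is quoted in the introduction as a background result, with a citation to Kazhdan's original paper, and is used only as motivation for the notion of uniform stability. So there is no ``paper's own proof'' to compare against.

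That said, your sketch is essentially Kazhdan's original strategy: iterate an averaging operator built from an invariant mean so that the defect squares at each step, and sum the resulting geometric-type series. One slip worth flagging: in your expansion you claim $m_{x}[S(x)]=0$, but in fact $m_{x}[S(x)]=m_{x}[u_{\Phi}(\gamma_{2},x)]-\Phi(\gamma_{2})=A(\gamma_{2})-\Phi(\gamma_{2})$, which need not vanish. The computation still closes, because $A(\gamma_{1})\Phi(\gamma_{2})+A(\gamma_{1})\bigl(A(\gamma_{2})-\Phi(\gamma_{2})\bigr)=A(\gamma_{1})A(\gamma_{2})$, leaving only the genuinely quadratic term $m_{x}[R(x)S(x)]$; you should rewrite that line accordingly. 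Beyond that, the places where real care is needed are the ones you yourself identify: making the constant $C$ explicit (the passage between $A$ and $\Phi^{\#}$ via $\exp$/$\log$ contributes its own second-order remainder, and the defect of $\Phi^{\#}$ picks up further terms from $\norm{A-\Phi^{\#}}$), and then verifying that $\delta\le\tfrac{1}{200}$ really suffices for the total displacement $\sum_{n}\norm{\ph_{n+1}-\ph_{n}}$ to stay under $2\delta$. These are genuine computations, not formalities, but the architecture is sound.
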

In general, we say that the group $\Gamma$ is \emph{uniformly stable}
in the operator norm, if for any $\varepsilon>0$ there exists a $\delta>0$
such that for any $n\in\NN$ and any map $\ph:\Gamma\to U\left(n\right)$
satisfying 
\[
\norm{\ph\left(\gamma_{1}\gamma_{2}\right)-\ph\left(\gamma_{1}\right)\ph\left(\gamma_{2}\right)}_{op}\le\delta\quad\forall\gamma_{1},\gamma_{2}\in\Gamma,
\]
there exists a genuine representation $\psi:\Gamma\to U\left(n\right)$
such that $\norm{\ph\left(\gamma\right)-\psi\left(\gamma\right)}_{op}\le\varepsilon$
for all $\gamma\in\Gamma$\footnote{This property was called ``strong Ulam stability'' in \cite{BurgerOzawaThom}.}.

Burger, Ozawa and Thom \cite{BurgerOzawaThom} have proved that the
groups $SL_{n}\left(\mathcal{O}_{S}\right)$ are uniformly stable
in the operator norm, where $n\ge3$ and $\mathcal{O}$ is the ring
of integers of a number field and $\mathcal{O}_{S}$ some localization
of $\mathcal{O}$. Glebsky, Lubotzky, Monod and Rangarajan \cite{AsymptoticCoho}
have proved uniform stability in $U\left(n\right)$ with respect to
arbitrary submultiplicative norms for all amenable groups (generalizing
Kazhdan's theorem for the case of finite dimensional Hilbert spaces)
and for a large class of lattices in higher rank semisimple Lie groups
using vanishing properties of asymptotic cohomology. Rolli \cite{Rolli}
proved a negative result for nonabelian free groups by constructing
maps $\ph:\FF_{m}\to U\left(n\right)$ with $\norm{\ph\left(\gamma_{1}\gamma_{2}\right)-\ph\left(\gamma_{1}\right)\ph\left(\gamma_{2}\right)}_{op}\le\delta$
for any $\gamma_{1},\gamma_{2}\in\Gamma$ and arbitrarily small $\delta$,
that are bounded from all representations $\psi:\Gamma\to U\left(n\right)$.

\medskip{}

Following Arzhantseva and P\u{a}unescu \cite{AP_LinearSofic}, we
consider a similar problem with the unitary groups replaced by $GL_{n}\left(F\right)$
(for some field $F$), endowed with the \emph{normalized rank metric:
\[
rk\left(A-B\right)\vcentcolon=\frac{1}{n}rank\left(A-B\right)\qquad\forall A,B\in GL_{n}\left(F\right).
\]
}This metric can be seen as a generalization of the normalized Hamming
metric from finite permutation groups to general linear groups, see
\cite{AP_LinearSofic} for details.

\medskip{}

For a group $\Gamma$, an\emph{ $\varepsilon$-representation} is
a function $\ph:\Gamma\to GL_{n}\left(F\right)$ satisfying 
\[
\begin{aligned}rk\left(\ph\left(\gamma_{1}\right)\ph\left(\gamma_{2}\right)-\ph\left(\gamma_{1}\gamma_{2}\right)\right)\le\varepsilon\quad\forall\gamma_{1},\gamma_{2}\in\Gamma\end{aligned}
.
\]

We say that the group $\Gamma$ is \emph{uniformly $F$-stable} if
for any $\varepsilon>0$, there exists a $\delta>0$ such that for
any $n\in\NN$ and any $\delta$-representation $\ph:\Gamma\to GL_{n}\left(F\right)$,
there exists a representation $\psi:\Gamma\to GL_{n}\left(F\right)$
such that $rk\left(\ph\left(\gamma\right)-\psi\left(\gamma\right)\right)\le\varepsilon\quad\forall\gamma\in\Gamma$. 

\medskip{}

Our first main result shows that many lattices in higher rank semisimple
Lie groups are not uniformly $\CC$-stable, as opposed to submultiplicative
norms as in \cite{AsymptoticCoho}.
\begin{thm}
\label{thm:Lattices}Let $G$ be a connected linear semisimple real
Lie group, defined over $\mathbb{Q}$ and of rank at least $2$ with
no compact factors, and $\Gamma\le G$ an irreducible lattice. Then
$\Gamma$ is not uniformly $\CC$-stable.
\end{thm}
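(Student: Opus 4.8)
The overall plan is to deduce the statement from the (much stronger) failure of flexible $\CC$-stability for the semisimple Lie algebra $\mathfrak{g}=\mathrm{Lie}(G)$ established earlier in the paper, using Margulis superrigidity as the bridge between finite-dimensional representations of $\Gamma$ and of $\mathfrak{g}$. Concretely, I would fix a constant $\varepsilon_{0}>0$, a sequence $d_{n}\to\infty$, and Lie-algebra $\tfrac1n$-almost-homomorphisms $\varphi_{n}\colon\mathfrak{g}\to M_{d_{n}}(\CC)$ — i.e.\ $rk\bigl([\varphi_{n}(X),\varphi_{n}(Y)]-\varphi_{n}([X,Y])\bigr)\to0$ uniformly on a fixed basis of $\mathfrak{g}$ — whose rank-distance to every genuine representation $\mathfrak{g}\to M_{d_{n}}(\CC)$ is at least $\varepsilon_{0}$; these are exactly what the Lie algebra result supplies. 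From these I manufacture $\delta_{n}$-representations $\psi_{n}\colon\Gamma\to GL_{d_{n}}(\CC)$ with $\delta_{n}\to0$. The most uniform way I see is to exponentiate the $\varphi_{n}$ to (measurable/continuous) almost-representations of $G$ witnessing that $G$ is not strictly $\CC$-stable, and then take $\psi_{n}$ to be their restriction to $\Gamma$; restriction trivially preserves the defect bound. (When $\Gamma$ has positive $\mathbb{Q}$-rank one may instead work inside $\Gamma$: after passing to a finite-index subgroup, $\Gamma$ is generated by unipotents $u_{i}=\exp(X_{i})$ subject to Steinberg-type relations that are formal commutator-calculus consequences of the bracket relations of $\mathfrak{g}$, and one sets $\psi_{n}(u_{i})=\exp(\varphi_{n}(X_{i}))$ and extends along words, checking via bi-invariance and the inequality $rk(AB-A'B')\le rk(A-A')+rk(B-B')$ that the relations become identities up to rank $o(1)$. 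I would keep the restriction route as primary, since it also covers cocompact lattices.)

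For the rigidity step I argue by contradiction: if $\Gamma$ were uniformly $\CC$-stable, then for a small fixed $\varepsilon>0$ and all large $n$ there is a genuine representation $\rho_{n}\colon\Gamma\to GL_{d_{n}}(\CC)$ with $rk(\psi_{n}(\gamma)-\rho_{n}(\gamma))\le\varepsilon$ for all $\gamma\in\Gamma$. Decompose $\CC^{d_{n}}$ into $\rho_{n}$-irreducible summands and let $U_{n}$, $B_{n}$ be the sums of those with, respectively, non-precompact and precompact image. Because $\Gamma$ is an irreducible lattice in a higher-rank connected semisimple real Lie group with no compact factors, Margulis superrigidity applies to each non-precompact irreducible summand (whose Zariski closure is reductive): after passing to a finite-index subgroup of $\Gamma$ (harmless, by the standard induction/restriction stability argument for finite extensions) and using that $G$ is linear and defined over $\mathbb{Q}$, the restriction $\rho_{n}|_{U_{n}}$ extends to a rational representation $\widetilde{\rho}_{n}\colon G\to GL(U_{n})$. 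Granting that $\dim B_{n}=o(d_{n})$, the representation $\widetilde{\rho}_{n}\oplus 0_{B_{n}}$ of $G$ is genuine and is $(\varepsilon+o(1))$-close, in the rank metric, to our almost-representation $\Phi_{n}$ of $G$ on all of $\Gamma$; upgrading this to rank-metric closeness on all of $G$ (a co-finite-volume / Zariski-density argument) contradicts that $G$ is not strictly $\CC$-stable once $\varepsilon$ is small. In the unipotent formulation the endgame is instead purely Lie-algebraic: $\widetilde{\rho}_{n}$ differentiates to a homomorphism $d\widetilde{\rho}_{n}\colon\mathfrak{g}\to M_{d_{n}}(\CC)$, and comparing $d\widetilde{\rho}_{n}(X_{i})=\log\rho_{n}(u_{i})$ with $\varphi_{n}(X_{i})=\log\psi_{n}(u_{i})$ on the fixed basis (the logarithm being rank-metric Lipschitz, up to a bounded factor, on the relevant almost-unipotent elements) yields $rk(d\widetilde{\rho}_{n}(X_{i})-\varphi_{n}(X_{i}))\le C\varepsilon+o(1)$, contradicting the choice of $\varphi_{n}$ once $C\varepsilon<\varepsilon_{0}$.

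The main obstacle is proving $\dim B_{n}=o(d_{n})$, with the implied bound uniform in $n$. This is precisely where the rank metric is delicate: the $\rho_{n}$-invariant splitting $\CC^{d_{n}}=U_{n}\oplus B_{n}$ need not be even approximately $\psi_{n}$-invariant, so one cannot simply project the estimate $rk(\psi_{n}-\rho_{n})\le\varepsilon$ onto $B_{n}$. I would circumvent this by building into $\psi_{n}$ a distinguished element $g_{0}\in\Gamma$ — the exponential of a suitable semisimple (or unipotent) element of $\mathfrak{g}$ — for which $\psi_{n}(g_{0})$ is quantitatively far, in the rank metric, from every element of finite order and from every matrix with precompact powers; then any $\rho_{n}$ that is $\varepsilon$-close to $\psi_{n}$ must have $\rho_{n}(g_{0})$ "mostly of the same type", forcing the precompact summand to occupy only a codimension-$o(d_{n})$ subspace. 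The secondary technical points — the logarithm/exponential dictionary between rank-metric estimates for almost-unipotent group elements and the corresponding Lie-algebra elements, and (in the restriction route) the transfer of rank-metric closeness from the lattice to all of $G$ — together with the appeal to superrigidity are exactly where the hypotheses "rank $\ge2$'', "irreducible'', "no compact factors'' and "defined over $\mathbb{Q}$'' are genuinely consumed.
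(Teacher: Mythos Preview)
Your overall architecture---produce almost-representations of $G$, restrict to $\Gamma$, apply superrigidity to any nearby genuine $\Gamma$-representation, transfer closeness back to $G$ by Zariski density, and contradict strict instability of $G$---matches the paper's. The gap is in how you manufacture the almost-representations of $G$: you propose to \emph{exponentiate} the Lie-algebra almost-representations $\varphi_n$ of $\mathfrak{g}$. The paper explicitly flags this lifting as an open problem (Section~\ref{subsec:Remarks}): it is not known whether rank-metric almost-representations of $\mathfrak{g}$ integrate to smooth almost-representations of $G$, and a concrete obstruction (approximate Weyl-invariance, equation~\eqref{eq:1}) is discussed there. If such a lifting were available for the $\varphi_n^{\lambda}$ with $\lambda$ not $W$-linked to any dominant integral weight, your argument would in fact prove that $\Gamma$ is not \emph{flexibly} $\CC$-stable, which the paper leaves open. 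Your alternative unipotent route has the same defect in disguise: the claim that the matrix logarithm is ``rank-metric Lipschitz on the relevant almost-unipotent elements'' is exactly a special case of this integration/differentiation dictionary, and is not established.

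The paper runs the construction in the opposite direction. It starts from a genuine rational highest-weight representation $\Phi:G\to GL_n(\CC)$, compresses it to codimension one to obtain a $\tfrac{2}{n-1}$-representation of $G$ (Lemma~\ref{lem:compression}), and restricts \emph{that} to $\Gamma$. The Lie algebra enters only a posteriori, via Proposition~\ref{prop:diff} and Theorem~\ref{thm:thm9}, to certify that these compressions are bounded away from genuine $(n{-}1)$-dimensional representations (Corollary~\ref{cor:Liegroups}). Two features of this construction are essential and would fail for your exponentials. First, the compressed map is a \emph{rational} function of the matrix entries of $G$, so the condition $rk(\Phi(g)-\Psi(g))\le\varepsilon$ is the vanishing of minors and hence Zariski-closed; this is precisely what makes Lemma~\ref{prop:zariski} transfer closeness from the lattice to all of $G$, and a transcendental exponentiated map would not allow this step. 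Second, the paper sidesteps your ``main obstacle'' $\dim B_n=o(d_n)$ entirely: it invokes the form of superrigidity that first enlarges $G$ by a compact factor $K$ so that every $\Gamma$-representation extends, on a finite-index subgroup, to a rational representation of $G'=G\times K$---no precompact/non-precompact decomposition is needed.
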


\medskip{}
Similarly to stability in permutations \cite{BeckerChapman} and in
the normalized Hilbert-Schmidt norm \cite{DeChiffreOzawaThom}, we
consider also a notion of \emph{flexible stability}, where the homomorphism
$\psi:\Gamma\to GL_{N}\left(F\right)$ is allowed to take values in
matrices of larger dimension $N\ge n$, and $\ph\left(\Gamma\right)$
is embedded in $M_{N}\left(F\right)$ as a corner, see Section \ref{subsec:Stability}.
We will usually refer to stability in the normal sense (where an increase
in dimension is not allowed) as \emph{strict stability}. 

The question of whether lattices as in Theorem \ref{thm:Lattices}
are \emph{flexibly} uniformly stable remains open, and we discuss
this in Section \ref{subsec:Remarks}. 

\medskip{}

Our second main result concerns flexible stability of semisimple Lie
algebras, which will be shown to be connected to the stability of
semisimple Lie groups and their lattices. For a Lie algebra $\mf g$,
an $\varepsilon$-representation is a linear map $\ph:\mf g\to\mf{gl}_{n}\left(F\right)$
satisfying 
\[
rk\left(\left[\ph\left(x\right),\ph\left(y\right)\right]-\ph\left(\left[x,y\right]\right)\right)\le\varepsilon\quad\forall x,y\in\mf g,
\]
and $\mf g$ is uniformly $F$-stable if for any $\varepsilon>0$,
there exists a $\delta>0$ such that for any $n\in\NN$ and any $\delta$-representation
$\ph:\mf g\to\mf{gl}_{n}\left(F\right)$, there exists a representation
$\psi:\mf g\to\mf{gl}_{n}\left(F\right)$ such that $rk\left(\ph\left(x\right)-\psi\left(x\right)\right)\le\varepsilon\quad\forall x\in\mf g$. 

The following result was proven independently in \cite{BauerBlacharGreenfeld}.
\begin{thm}
Let $\mf g$ be a finite-dimensional semisimple real Lie algebra.
Then $\mf g$ is not flexibly uniformly $\CC$-stable.
\end{thm}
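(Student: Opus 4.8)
The plan is to build, for each $k$, an explicit $\delta_k$-representation $\ph_k\colon\mf g\to\mf{gl}_{n_k}(\CC)$ with $\delta_k\to0$ that nevertheless stays at distance $\ge\varepsilon_0(\mf g)>0$ from every genuine representation, even after the bounded enlargement of dimension that flexibility allows; the driving mechanism is to force a central element of $U(\mf g)$ to act under $\ph_k$ by a scalar lying in a gap of the spectrum that no genuine representation can realise. First I would complexify (a real almost-representation into $\mf{gl}_n(\CC)$ and a genuine real representation interact with the $\CC$-bilinear Casimir operator in exactly the same way, so nothing is lost), and assume $\mf g$ complex semisimple. Fix a Cartan subalgebra $\mf h$, positive roots $\Delta^+$, the Weyl vector $\rho$, and the Casimir $\Omega\in U(\mf g)$, which acts on the irreducible module $L(\mu)$ by the scalar $c_\mu=\langle\mu+\rho,\mu+\rho\rangle-\langle\rho,\rho\rangle$; put $c_{\min}:=\min\{c_\mu:\mu\text{ dominant integral},\ \mu\ne0\}$, a positive constant depending only on $\mf g$, and choose $\lambda\in\mf h^*$ with $0<c_\lambda<c_{\min}$ (for instance $\lambda=t\rho$ with $t>0$ small), so that $c_\lambda\notin\{c_\mu:\mu\text{ dominant integral}\}$.

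For the construction, grade the Verma module $M(\lambda)=U(\mf n^-)v_\lambda$ by depth, $\operatorname{depth}(\mu):=\operatorname{ht}(\lambda-\mu)$, let $M(\lambda)_{\le k}$ be the span of the weight spaces of depth $\le k$, $n_k:=\dim M(\lambda)_{\le k}$ (a quasi-polynomial in $k$ of degree $\#\Delta^+$), and $P_k$ the projection onto it along the higher-depth part; set $\ph_k(x):=P_k\circ\rho_\lambda(x)\big|_{M(\lambda)_{\le k}}$ for $x\in\mf g$, where $\rho_\lambda$ denotes the Verma action (for $\mf{sl}_2$ this is simply the truncated Verma module with highest weight $\lambda$, in the obvious basis). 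Since a Chevalley generator changes depth by a bounded amount and $P_k$ is the identity on vectors of depth $\le k$, a case check shows that among basis elements the only non-vanishing commutator defects $[\ph_k(x),\ph_k(y)]-\ph_k([x,y])$ are those of the pairs $(e_i,f_i)$ for simple roots $\alpha_i$, and each of these is supported on the top depth layer, hence of rank $\le\dim\bigl(M(\lambda)_{\le k}/M(\lambda)_{\le k-1}\bigr)=O(k^{\#\Delta^+-1})=o(n_k)$; by bilinearity the defect for a general pair $x,y$ is a fixed linear combination of these. So $\ph_k$ is a $\delta_k$-representation with $\delta_k=O(1/k)\to0$.

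Next, $\ph_k(\Omega)=c_\lambda\cdot\operatorname{Id}_{n_k}$ \emph{exactly}: writing $\Omega$ in the PBW form $(\text{degree}\le2\text{ in }\mf h)+2\sum_{\alpha\in\Delta^+}f_\alpha e_\alpha$ and applying it to a weight vector of depth $\le k$, every raising operator $e_\alpha$ lowers depth before the matching $f_\alpha$ raises it, so the truncation is never triggered and $\ph_k$ reproduces the full Verma action of the central element $\Omega$, which is $c_\lambda$. Now suppose $\ph_k$ were flexibly $\varepsilon$-close to a genuine representation $\psi\colon\mf g\to\mf{gl}_N(\CC)$; flexibility permits only a bounded enlargement of dimension, say $N\le Cn_k$, and $rk(\iota\ph_k(x)-\psi(x))\le\varepsilon$ for all $x\in\mf g$, where $\iota$ is the corner embedding. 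Then $\psi(\Omega)$ is diagonalisable with $\operatorname{spec}(\psi(\Omega))\subseteq\{c_\mu\}$, so $c_\lambda\notin\operatorname{spec}(\psi(\Omega))$, while $\iota\ph_k(\Omega)=c_\lambda I_{n_k}\oplus0_{N-n_k}$ has $c_\lambda$ (which is $\ne0$) as an eigenvalue of multiplicity $n_k$; by the elementary fact that $\operatorname{rank}(A-B)\ge m$ whenever $A$ has an eigenvalue of multiplicity $m$ not in $\operatorname{spec}(B)$, we get $\operatorname{rank}(\iota\ph_k(\Omega)-\psi(\Omega))\ge n_k$. On the other hand $\Omega$ is a fixed element of $U(\mf g)$ of degree $\le2$, so $\iota\ph_k(\Omega)-\psi(\Omega)$ expands into boundedly many terms of the shape $\iota\ph_k(g)\iota\ph_k(g')-\psi(g)\psi(g')$ (plus linear terms), each of rank $\le2\varepsilon N$ by sub-additivity of rank under products; hence $\operatorname{rank}(\iota\ph_k(\Omega)-\psi(\Omega))\le C'\varepsilon N\le CC'\varepsilon\,n_k$ with $C'=C'(\mf g)$. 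Comparing forces $1\le CC'\varepsilon$, impossible once $\varepsilon<(CC')^{-1}$; so with $\varepsilon_0:=(CC')^{-1}$, for any $\delta>0$ a $\ph_k$ with $\delta_k\le\delta$ is a $\delta$-representation not flexibly $\varepsilon_0$-close to any genuine one, and $\mf g$ is not flexibly uniformly $\CC$-stable.

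The delicate point I anticipate is the simultaneous control demanded of the construction: a low-rank defect forces truncating a module whose dimension grows only polynomially — a Verma module — so that the ``boundary'' of the truncation is a vanishing fraction of it, while at the same time the Casimir must be kept exactly scalar and equal to a value outside $\{c_\mu:\mu\text{ dominant integral}\}$. Carrying out the depth-filtration case analysis for an arbitrary root system, and verifying that the comparison between $n_k$ and the enlarged dimension $N$ really closes under the bounded flexibility increase, is where the substantive work lies.
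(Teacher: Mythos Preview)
Your approach follows the paper's overall strategy---reduce to the complex case by complexification, truncate a Verma module $M(\lambda)$ to a finite-dimensional piece, and use a central element of $U(\mf g)$ to separate the resulting $\varepsilon$-representation from all genuine ones---but you implement both steps in a more elementary way. The paper truncates by PBW \emph{degree} in $U(\mf n^-)$, setting $D_n=\operatorname{span}\{y_1^{r_1}\cdots y_m^{r_m}v_0:\sum r_i\le n\}$, and then invokes the full Harish-Chandra isomorphism: for generators $z_1,\ldots,z_\ell$ of $Z(U(\mf g))$ it shows each $\tilde\ph_n^\lambda(z_i)$ is close in rank to the scalar $\chi_\lambda(z_i)$, and argues that a nearby genuine $\psi$ would force $\chi_\lambda=\chi_{\mu}$ for some highest weight $\mu$ occurring in $\psi$. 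This works for \emph{any} $\lambda$ not $W$-linked to $\Lambda^+$ and yields the stronger Theorem~\ref{thm:thm9}: an uncountable family of pairwise far-apart $\varepsilon$-representations. You instead truncate by \emph{depth} (height of $\lambda-\mu$) and use only the Casimir $\Omega$, with $\lambda$ chosen so that $c_\lambda$ lands in the gap $(0,c_{\min})$ of its finite-dimensional spectrum; this sidesteps Harish-Chandra entirely, at the cost of the extra structural information. Your observation that with the depth filtration and the $f_\alpha e_\alpha$ ordering the Casimir acts \emph{exactly} as the scalar $c_\lambda$ on the truncation---rather than only up to small rank, which is all the paper establishes for its $z_i$---is a genuine simplification and makes the final rank comparison especially clean. (Incidentally, the bound $N\le Cn_k$ you invoke is available but not actually needed: the rank estimates already give $n_k\le C'\varepsilon\,n_k$ directly.)

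One minor correction: it is not true that only the pairs $(e_i,f_i)$ for simple $\alpha_i$ produce nonzero commutator defect. For distinct simple roots $\alpha\ne\beta$ and $v$ of depth exactly $k$, one has $\ph_k(e_\alpha)\ph_k(f_\beta)v=0$ (since $P_k f_\beta v=0$) while $\ph_k(f_\beta)\ph_k(e_\alpha)v=f_\beta e_\alpha v$ generally does not vanish, yet $[e_\alpha,f_\beta]=0$. This does not damage the argument: for any pair of basis root vectors the defect is still supported on the top $O(1)$ depth layers, of total dimension $O(k^{|\Phi^+|-1})=o(n_k)$, so $\delta_k=O(1/k)$ as you need.
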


Finally, we provide counterexamples for flexible stablity of groups,
by a construction in the spirit of Rolli \cite{Rolli} who proved
that free groups are not stable in the operator norm, in addition
to the proof in \cite{BeckerChapman} that free groups are not flexibly
stable in permutations.
\begin{thm}
\label{thm:free}Let $\Gamma$ be a group that surjects onto the free
group $\FF_{2}$ of rank $2$. Then $\Gamma$ is not flexibly $F$-stable
for any field $F$.
\end{thm}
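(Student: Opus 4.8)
The plan is to reduce the statement to the case $\Gamma=\FF_{2}$ and then, following Rolli \cite{Rolli}, to exhibit an explicit sequence of almost-representations of $\FF_{2}$ that stays uniformly far from genuine representations even when an enlargement of the dimension is allowed. For the reduction, fix a surjection $\pi\colon\Gamma\twoheadrightarrow\FF_{2}=\langle a,b\rangle$ and lifts $s,t\in\Gamma$ with $\pi(s)=a$, $\pi(t)=b$. If $\rho\colon\FF_{2}\to GL_{n}(F)$ is any $\delta$-representation, then $\rho\circ\pi$ is a $\delta$-representation of $\Gamma$. Suppose a genuine representation $\psi\colon\Gamma\to GL_{N}(F)$ were flexibly $\varepsilon$-close to $\rho\circ\pi$ (in the sense of Section \ref{subsec:Stability}). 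Since $\FF_{2}$ is free, $a\mapsto\psi(s)$, $b\mapsto\psi(t)$ extends to a genuine representation $\widetilde{\psi}\colon\FF_{2}\to GL_{N}(F)$; for every reduced word $w=w(a,b)$ one has $\widetilde{\psi}(w)=\psi(w(s,t))$ and $\rho(w)=(\rho\circ\pi)(w(s,t))$, so the flexible distance between $\widetilde{\psi}(w)$ and $\rho(w)$ equals that between $\psi(w(s,t))$ and $(\rho\circ\pi)(w(s,t))$, which is $\le\varepsilon$. Hence $\Gamma$ being flexibly $F$-stable would force $\FF_{2}$ to be flexibly $F$-stable, and it suffices to prove that $\FF_{2}$ is not.

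For the construction I would exploit the abundance of the (bounded) second cohomology of $\FF_{2}$, encoded by a Brooks-type quasimorphism $q\colon\FF_{2}\to\ZZ$ attached to a suitable word (for instance $ab$): $q$ has bounded defect, is unbounded, and is not a homomorphism. The point is to realise a multiple of $q$ inside \emph{monomial} (generalized permutation) matrices, where the normalized rank metric restricts to a normalized weighted Hamming metric, so Rolli's cochain estimates transfer essentially verbatim. Concretely, for a sequence $n\to\infty$ one defines $\rho_{n}\colon\FF_{2}\to GL_{k_{n}}(F)$ by assembling, from the syllable decomposition of the reduced word $w$, a product of explicit blocks, chosen so that only the bounded-length cancellation that can occur in the middle of $w_{1}w_{2}$ contributes to the defect, whence $\operatorname{rank}\big(\rho_{n}(w_{1})\rho_{n}(w_{2})-\rho_{n}(w_{1}w_{2})\big)=O(\delta_{n}k_{n})$ with $\delta_{n}\to 0$, while on a chosen family of elements the rank-length $w\mapsto\operatorname{rank}(\rho_{n}(w)-I)$ faithfully records the value of $q$. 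Over $\CC$ or $\overline{\FF_{p}}$ the blocks can be taken diagonal, built from roots of unity whose order grows with $n$; over a general field $F$ the blocks should be honest permutation matrices and $q$ encoded combinatorially, so that the construction works over any $F$.

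The heart of the argument is to show that this $\rho_{n}$ is not flexibly $\varepsilon_{0}$-close to any genuine representation $\psi\colon\FF_{2}\to GL_{N}(F)$ for a fixed $\varepsilon_{0}>0$. For a genuine $\psi$ the function $g\mapsto\operatorname{rank}(\psi(g)-I)$ is non-negative, symmetric, conjugation-invariant and sub-additive, and flexible $\varepsilon_{0}$-closeness both pins down $\psi$ on the free generators and forces this length to agree with $\operatorname{rank}(\rho_{n}(\cdot)-I)$ up to an error controlled by $\varepsilon_{0}$. One then argues that a genuine $\psi$ this close to $\rho_{n}$ must be almost block-diagonal relative to the monomial structure of $\rho_{n}$, and hence, by the (flexible) rank-metric stability of abelian groups, that its block-diagonal part comes from genuine characters $\FF_{2}\to F^{\times}$; since characters factor through $\FF_{2}^{\mathrm{ab}}\cong\ZZ^{2}$, the content of $q$ inside $\rho_{n}$ would then be explained by a homomorphism, contradicting the fact that $q$ is unbounded modulo homomorphisms — as one sees by evaluating along a suitable family (say powers $g^{m}$ with $q(g)\ne 0$) and ruling out the correction terms that flexibilization might introduce via the non-amenability of $\FF_{2}$, through a counting argument over a large finite piece of the Cayley graph in the spirit of the flexible-permutation argument of \cite{BeckerChapman}.

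I expect the main obstacle to be precisely this last step: the target of the approximation is an \emph{arbitrary} linear representation into a possibly much larger $GL_{N}(F)$, not a monomial one, and unlike in the unitary setting one cannot use a winding-number argument, since $\operatorname{rank}(A^{m}-I)\le\operatorname{rank}(A-I)$ forbids a single element from witnessing instability; the proof must therefore genuinely use the non-abelian structure of $\FF_{2}$. The delicate points will be (i) a rank-metric rigidity statement to the effect that a genuine representation rank-close to the monomial $\rho_{n}$ can be conjugated into an almost-block form compatible with it, reducing the problem to the abelian case, and (ii) the quantitative bookkeeping needed so that a single $\varepsilon_{0}$ works for all $\delta_{n}$ and so that the enlargement $N\ge k_{n}$ provides no extra room.
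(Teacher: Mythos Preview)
Your reduction from $\Gamma$ to $\FF_{2}$ is fine and matches the paper's. The problem is the obstruction argument.

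First, a concrete gap: you invoke ``the (flexible) rank-metric stability of abelian groups'' as an input. As the paper itself remarks after Corollary \ref{cor:strictflex}, this is open --- it is not even known whether $\ZZ$ is uniformly $F$-stable for any field $F$. So the proposed route through block-diagonalisation and characters rests on an unproven statement. The rigidity step (i) you flag as delicate is likewise speculative: there is no reason a generic $\psi\colon\FF_{2}\to GL_{N}(F)$ rank-close to a monomial map should be conjugable to approximate block form.

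Second, and more to the point, you are over-engineering. You correctly observe that $g\mapsto\operatorname{rank}(\psi(g)-I)$ is subadditive and that $\operatorname{rank}(A^{m}-I)\le\operatorname{rank}(A-I)$, and from this you conclude that a single element cannot witness instability and that one ``must genuinely use the non-abelian structure''. But the same phenomenon that gives $\operatorname{rank}(A^{m}-I)\le\operatorname{rank}(A-I)$ gives, for a \emph{genuine} representation $\psi$, the uniform bound
\[
\operatorname{rank}\bigl(\psi(w)-I\bigr)\ \le\ \operatorname{rank}\bigl(\psi(a)-I\bigr)+\operatorname{rank}\bigl(\psi(b)-I\bigr)\qquad\text{for every }w\in\FF_{2},
\]
since the intersection $\ker(\psi(a)-I)\cap\ker(\psi(b)-I)$ is fixed by every word. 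This is the whole trick: it is not merely subadditive in word length, it is \emph{uniformly bounded}. Once you see this, no quasimorphisms, block structure, or cohomology are needed. One simply builds $\ph$ so that $\ph(a),\ph(b)$ are rank-$\le\frac{1}{n}$ perturbations of $I_{n}$ (forcing any nearby genuine $\psi$ to send \emph{every} word close to $I$), while some specific word $w$ has $\operatorname{rank}(\ph(w)-I_{n})=n$. The paper does exactly this: set $\ph(a)=I$, send each $b$-syllable $b^{k}$ to a rank-one perturbation $\tau(k)$ of $I_{n}$, and choose the $\tau(k)$ so that $\tau(1)\tau(2)\cdots\tau(n)$ is full-rank away from $I$; then $w=ab\,ab^{2}\cdots ab^{n}$ does the job, and a three-line triangle-inequality computation gives $d^{flex}(\ph,\psi)\ge\frac{1}{6}-\frac{1}{6n}$ for every representation $\psi$.
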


There are various results on uniform stability of discrete groups
with respect to different classes of groups and metrics. De-Chiffre,
Ozawa and Thom \cite{DeChiffreOzawaThom} have generalized a result
by Gowers and Hatami \cite{GowersHatami}, proving that all amenable
groups are uniformly flexibly stable in the Hilbert-Schmidt norm.
In contrast, Akhtiamov and Dogon \cite{AkhtiamovDogon} have proved
that for $\Gamma$ a finitely generated residually finite group, \emph{strict}
uniform Hilbert-Schmidt stability is equivalent to $\Gamma$ being
virtually abelian. 

Becker and Chapman \cite{BeckerChapman} have proved that amenable
groups are uniformly flexibly stable in permutations, and that in
many cases the flexibility is necessary; For any group $\Gamma$ with
finite quotients of unbounded size, $\Gamma$ is not strictly stable.
Additionally, they have proved that the groups $SL_{r}\left(\mathcal{O}_{K}\right)$
are uniformly flexibly stable in permutations, where $r\ge3$ and
$\mathcal{O}_{K}$ the ring of integers in a number field $K$. 

\medskip{}

In general, representations of a group into $GL_{n}\left(\CC\right)$
tend to be much less understood than unitary representations. For
this reason, we focus in this paper on a class of groups whose (finite
dimensional) representation theory is relatively well behaved, namely,
irreducible lattices in higher rank semisimple Lie groups. We consider
compressions of representations of $\Gamma\le G$ coming from the
Lie group $G$, and taking differentials at the identity we obtain
almost representations of its Lie algebra $\mf g$. Thus, the notion
of stability of Lie algebras arises naturally in this context, and
our main focus will be in the case where the Lie group/algebra is
semisimple. In Theorem \ref{thm:thm9}, we use central characters
of representations of $\mf g$ to prove the existence of an uncountable
family of ``highest weight'' $\varepsilon$-representations of any
semisimple Lie algebra, that are nonequivalent in a suitable sense.
In particular, this reestablishes the fact that they are not flexibly
stable. We then use the connection between stability of Lie groups
and stability of their Lie algebras, as well as rigidity properties
for lattices in semisimple Lie groups to deduce Theorem \ref{thm:Lattices}.

\medskip{}

There is an important related notion of \emph{pointwise stability,
}where for a finitely presented group/algebra, an $\varepsilon$-homomorphism
is a map $\ph:S\to GL_{n}\left(F\right)$ ($\mf{gl}_{n}\left(F\right)$
in the case of algebras) from a set of generators $S$ such that the
finite set of relations $R$ are $\varepsilon$-satisfied in $GL_{n}\left(F\right)$
($\mf{gl}_{n}\left(F\right)$). The group/algebra is then \emph{pointwise
$F$-stable} if for any $\varepsilon>0$, there exists a $\delta>0$
such that for any $\delta$-representation $\ph:S\to GL_{n}\left(F\right)$
($\mf{gl}_{n}\left(F\right)$), there exists a representation $\psi:S\to GL_{n}\left(F\right)$
($\mf{gl}_{n}\left(F\right)$) such that $rk\left(\ph\left(s\right)-\psi\left(s\right)\right)\le\varepsilon,\;\forall s\in S$.
Flexible stability is defined analogously to uniform stability. We
refer to \cite{BauerBlacharGreenfeld} for a precise definition and
examples, and note that the term stability there corresponds to what
we call flexible stability in this paper.

Some form of pointwise $\CC$-stability of $\ZZ^{k}$ were established
by Elek and Grabowski in \cite{ElekGrabowski}, where the $\varepsilon$-representations
take values in unitary/self-adjoint matrices. This can be thought
of as $k$ matrices that pairwise almost commute, and pointwise stability
means that they are close to $k$ matrices that truly commute. 

Bauer, Blacher and Greenfeld \cite{BauerBlacharGreenfeld} have studied
\emph{pointwise} $\CC$-stability of general polynomial equations.
This includes finitely presented groups, associative algebras and
Lie algebras. They have proved that finite-dimensional associative
algebras are flexibly pointwise stable, but in general are not strictly
pointwise stable. Moreover they have proved that finite-dimensional
Lie algebras are typically not flexibly stable: If $\mf g$ is a finite
dimensional complex Lie algebra that is not solvable or has a nonabelian
nilpotent quotient, then it is not flexibly $\CC$-stable (\cite[Theorem 8.5]{BauerBlacharGreenfeld}).
Thus any nonabelian finite-dimensional complex Lie algebra is not
flexibly $\CC$-stable, and the abelian case is open (and is equivalent
to pointwise stability of $\ZZ^{k}$ as in \cite{ElekGrabowski}).
Although \cite{BauerBlacharGreenfeld} deals with pointwise stability
whereas in this paper we study uniform stability, for finite dimensional
associative/Lie algebras these notions can be shown to be equivalent,
see Section \ref{subsec:Stability}.

\subsection*{Acknowledgements}

I would like to thank my advisor Alex Lubotzky for his guidance and
valuable suggestions. I would also like to thank Alon Dogon and Francesco
Fournier-Facio for helpful remarks and suggestions. This work is part
of the PhD thesis of the author and was supported by the European
Research Council (ERC) under the European Union\textquoteright s Horizon
2020 (N. 882751).

\section{Preliminaries and first observations}

\subsection{\label{subsec:Stability}Uniform Stability}

Let $\Gamma$ be a group and let $\mathcal{G}=\left\{ H_{i}\right\} _{i\in I}$
be a family of groups. Let $\mathcal{F}=\left\{ \ph:\Gamma\to H\st H\in\mathcal{G}\right\} $
be the set of functions from $\Gamma$ to groups in $\mathcal{G}$,
a \emph{defect function $def:\mathcal{F}\to\RR_{\ge0}$ }and a \emph{distance
function} $d:\mathcal{F}\times\mathcal{F}\to\left[0,\infty\right]$.
We say that $\Gamma$ is $\left(\mathcal{G},def,d\right)$\emph{-stable}
if for any $\varepsilon>0$ there exists a $\delta>0$ such that for
any function $\ph\in\mathcal{F}$ with $def\left(\ph\right)\le\delta$
there exists a homomorphism $\psi\in\mathcal{F}$ such that $d\left(\ph,\psi\right)\le\varepsilon$.

In our context, the family $\mathcal{G}$ will be all invertible matrices
over $F$, $\mathcal{GL}\left(F\right)=\left\{ GL_{n}\left(F\right)\right\} _{n\in\NN}$,
and the the (uniform) defect function will be 
\[
def\left(\ph\right)=sup\,\left\{ rk\left(\ph\left(\gamma_{1}\right)\ph\left(\gamma_{2}\right)-\ph\left(\gamma_{1}\gamma_{2}\right)\right)\st\gamma_{1},\gamma_{2}\in\Gamma\right\} .
\]
The distance $d$ with respect to \emph{strict} uniform stability
will be 
\[
d^{strict}\left(\ph,\psi\right)=\begin{cases}
\underset{\gamma\in\Gamma}{sup}\;rk\left(\ph\left(\gamma\right)-\psi\left(\gamma\right)\right) & \ph,\psi:\Gamma\to GL_{m}\left(F\right)\\
\infty & else
\end{cases},
\]
so that the codomains of $\ph$ and $\psi$ must coincide. In the
context of \emph{flexible stability,} we allow taking the distance
between maps with different codomains. We use the notation used in
\cite{ElekGrabowski} and \cite{BauerBlacharGreenfeld}: For $A\in M_{n}\left(F\right)$,
let $\widehat{A}$ be the operator on $\CC^{\oplus\NN}$ which acts
as $A$ on the first $n$ basis vectors and $0$ otherwise. Thus we
may think of $\widehat{A}$ as an infinite matrix that vanishes everywhere
except for the submatrix $\widehat{A}_{\left[n\right],\left[n\right]}$
where it equals $A$. For $A\in GL_{n}\left(F\right)$ and $B\in GL_{m}\left(F\right)$,
define
\[
rk\left(A-B\right)\vcentcolon=\frac{dim\left(im\left(\widehat{A}-\widehat{B}\right)\right)}{min\left(n,m\right)},
\]
And define the flexible distance of $\ph,\psi\in\mathcal{F}$ by
\[
d^{flex}\left(\ph,\psi\right)=\underset{\gamma\in\Gamma}{sup}\,rk\left(\ph\left(\gamma\right)-\psi\left(\gamma\right)\right).
\]

We note that since we require the target matrices to be nonsingular,
we have 
\[
dim\left(im\left(\widehat{A}-\widehat{B}\right)\right)\ge\left|dim\left(im\left(\widehat{A}\right)\right)-dim\left(im\left(\widehat{B}\right)\right)\right|=\left|n-m\right|,
\]
so that $rk\left(A-B\right)$ can be small only if $\frac{\left|n-m\right|}{min\left(n,m\right)}$
is small. Thus, in flexible stability we allow an increase of the
target dimension, but this increase must be small relative to the
dimension.\medskip{}

The group $\Gamma$ is strictly uniformly $F$-stable if it is $\left(\mathcal{GL}\left(F\right),def,d^{strict}\right)$-stable
and flexibly uniformly $F$-stable if it is $\left(\mathcal{GL}\left(F\right),def,d^{flex}\right)$-stable.
\medskip{}

When working with Lie groups, we would like the relevant functions
to be in the associated category, which in this case is smooth maps.
So if $G$ is a Lie group, stability is defined as above where the
family $\mathcal{G}$ consists of Lie groups and the set $\mathcal{F}$
consists only of smooth maps $G\to H_{i}$. Strict\textbackslash flexible
$F$-stability will therefore be defined for Lie groups only for $F=\RR$
or $\CC$, so that $\mathcal{G}=\mathcal{GL}\left(F\right)$ consists
of Lie groups.\medskip{}

Uniform stability of a Lie algebra $\mf g$ over a field $K$ can
be defined similarly to stability of groups, where $\mathcal{G}=\left\{ \mf h_{i}\right\} _{i\in I}$
is now a family of Lie algebras over a field $F$ extending $K$,
and $\mathcal{F}$ are all $F$-linear maps $\phi:\mf g\to\mf h_{i}$.
The defect of $\phi\in\mathcal{F}$ is then
\[
def\left(\phi\right)=sup\,\left\{ rk\left(\left[\phi\left(x\right),\phi\left(y\right)\right]-\phi\left(\left[x,y\right]\right)\right)\st x,y\in\mf g\right\} .
\]
Strict/flexible uniform $F$- stability of $\mf g$ is then taken
with respect to $\mathcal{G}=\left\{ \mf{gl}_{n}\left(F\right)\right\} _{n\in\NN}$
and $d^{strict},d^{flex}$ are defined similarly to the case of groups. 

\medskip{}

Stability of finitely presented algebras was considered in \cite{BauerBlacharGreenfeld},
including Lie algebras, with respect to pointwise stability. For finite
dimensional Lie algebras, pointwise stability is equivalent to uniform
stability; We can always fix a basis $\left(z_{1},\ldots,z_{m}\right)$
of $\mf g$ with structure constants $\left[z_{i},z_{j}\right]=\sum_{k}\alpha_{ij}^{k}z_{k}$
and get a presentation of $\mf g$ with these relations. By linearity
of $\phi$, the term $\left[\phi\left(x\right),\phi\left(y\right)\right]-\phi\left(\left[x,y\right]\right)$
can be expanded into at most $m^{2}$ scalar multiples of the terms
$\left[\phi\left(z_{i}\right),\phi\left(z_{j}\right)\right]-\phi\left(\sum_{k}\alpha_{ij}^{k}z_{k}\right)$,
which are the defining relations of $\mf g$. So the uniform defect
is at most $m^{2}$ times the pointwise defect $\underset{i,j}{sup}\,\left\{ rk\left(\left[\phi\left(z_{i}\right),\phi\left(z_{j}\right)\right]-\sum_{k}\alpha_{ij}^{k}\phi\left(z_{k}\right)\right)\right\} $,
and the distance $\underset{x\in\mf g}{sup}\;rk\left(\phi\left(x\right)-\theta\left(x\right)\right)$
again by linearity, triangle inequality and invariance to scalar multiples,
is at most $m$ times the pointwise distance $\underset{i}{sup}\,\left\{ rk\left(\phi\left(z_{i}\right)-\theta\left(z_{i}\right)\right)\right\} $.

It is worth noting that one can similarly show equivalence between
pointwise and uniform stability of finite dimensional \emph{associative}
algebras, although we do not discuss associative algebras in this
paper.

Like in the case of groups, flexible stability of finite dimensional
Lie algebras can also be reduced to a small increase of the dimension;
By Lemma 3.2 in \cite{BauerBlacharGreenfeld}, if $\mf g$ is a Lie
algebra generated by $d$ elements, $\ph:\mf g\to\mf{gl}_{n}\left(F\right)$
is a $\delta$-representation and $\psi:\mf g\to\mf{gl}_{N}\left(F\right)$
a representation such that $n\le N$ and $d^{flex}\left(\ph,\psi\right)\le\varepsilon$
then we may assume that $N\le\left(1+\varepsilon d\right)n$.

\subsection{Compressions and strict vs. flexible stability}

Fix a subspace $U\le F^{n}$ of dimension $k$, a basis $B=\left\{ u_{1},\ldots,u_{k}\right\} $
for $U$, $\iota:U\to F^{n}$ the inclusion and $p:F^{n}\twoheadrightarrow U$
a projection, i.e. a linear map satisfying $p^{2}=p$ and $im\,p=U$.
For any $M\in M_{n}\left(F\right)$, we denote by 
\[
\left[M\right]_{U,B,p}\vcentcolon=p\circ M\circ\iota\in M_{k}\left(F\right)
\]
the matrix representing the linear map $p\circ M\circ\iota\in End\left(U\right)$
in the basis $B$, and call it the \textbf{compression of $M$ associated
to $\left(U,B,p\right)$.}\emph{ }We will often omit $B$ and $p$
from the notation writing $\left[M\right]_{U}$, where a choice of
a basis and a projection are implicit.
\begin{lem}
\label{lem:compression} 
\begin{enumerate}
\item \label{enu:compression1}Let $U\le F^{n}$ be a subspace of dimension
$k$, and $\left[M\right]_{U,B,p}$ the compression associated to
$U$ with a basis $B$ and a projection $p$. Then we have
\[
rank\left(\left[M\right]_{U,B,p}\right)\ge rank\left(M\right)-2\left(n-k\right)\quad\forall M\in M_{n}\left(F\right),
\]
and for any $M_{1},M_{2}\in M_{n}\left(F\right)$
\[
rank\left(\left[M_{1}M_{2}\right]_{U,B,p}-\left[M_{1}\right]_{U,B,p}\left[M_{2}\right]_{U,B,p}\right)\le n-k.
\]
\item \label{enu:compression2}Let $U_{1},U_{2}\le F^{n}$ be subspaces
of dimension $k$, $\iota_{i}:U_{i}\inj F^{n}$, $p_{i}:F^{n}\twoheadrightarrow U_{i}$
inclusions and projections, and $B_{i}$ bases for $U_{i}$, respectively.
Then for some $A\in GL_{k}\left(F\right)$,
\[
rank\left(\left[M\right]_{U_{1},B_{1},p_{1}}-A\left[M\right]_{U_{2},B_{2},p_{2}}A^{-1}\right)\le4\left(n-k\right),\quad\forall M\in M_{n}\left(F\right).
\]
\end{enumerate}
\end{lem}

\begin{proof}
We have by the triangle inequality
\begin{align*}
rank\left(\left[M\right]_{U}\right) & =rank\left(p\circ M\circ\iota\right)=rank\left(\iota\circ p\circ M\circ\iota\circ p\right)\\
 & =rank\left(M-M\left(I_{n}-\iota\circ p\right)-\left(I_{n}-\iota\circ p\right)M\circ\iota\circ p\right)\\
 & \ge rank\left(M\right)-2rank\left(I_{n}-\iota\circ p\right)=rank\left(M\right)-2\left(n-k\right).
\end{align*}
For $M_{1},M_{2}\in M_{n}\left(F\right)$,
\begin{align*}
rank\left(\left[M_{1}M_{2}\right]_{U}-\left[M_{2}\right]_{U}\left[M_{2}\right]_{U}\right) & =rank\left(p\circ\left(M_{1}M_{2}-M_{1}\left(\iota\circ p\right)M_{2}\right)\circ\iota\right)\\
 & \le rank\left(M_{1}M_{2}-M_{1}\left(\iota\circ p\right)M_{2}\right)\\
 & =rank\left(M_{1}\left(I_{n}-\iota\circ p\right)M_{2}\right)\\
 & \le rank\left(I_{n}-\iota\circ p\right)=n-k,
\end{align*}
proving \ref{enu:compression1}. For \ref{enu:compression2}, let
$W=U_{1}\cap U_{2}$, and let $B_{W}=\left\{ w_{1},\ldots,w_{m}\right\} $
be a basis for $W$. By conjugating the whole expression 
\[
\left[M\right]_{U_{1},B_{1},p_{1}}-A\left[M\right]_{U_{2},B_{2},p_{2}}A^{-1}
\]
by some matrix in $GL_{k}\left(F\right)$, we may assume that the
basis $B_{1}$ is some completion of $B_{W}$ to a basis of $U_{1}$.
Now by choosing $A\in GL_{k}\left(F\right)$ we may further assume
that the basis $B_{2}$ is given by some completion of $B_{W}$ to
a basis of $U_{2}$ and that the expression is given by 
\[
\left[M\right]_{U_{1},B_{1},p_{1}}-\left[M\right]_{U_{2},B_{2},p_{2}}.
\]
Now for $w_{i}\in W$, 
\[
\left(p_{1}\circ M\circ\iota_{1}-p_{2}\circ M\circ\iota_{2}\right)w_{i}=\left(p_{1}-p_{2}\right)Mw_{i},
\]
thus the restriction of $\left[M\right]_{U_{1},B_{1},p_{1}}-\left[M\right]_{U_{2},B_{2},p_{2}}$
to the subspace $W$ that has dimension at least $2k-n$ has rank
at most $rank\left(p_{1}-p_{2}\right)\le n-dimW\le2\left(n-k\right)$.
It follows that 
\[
rank\left(\left[M\right]_{U_{1},B_{1},p_{1}}-\left[M\right]_{U_{2},B_{2},p_{2}}\right)\le2\left(n-k\right)+n-\left(2k-n\right)=4\left(n-k\right).
\]
\end{proof}
Lemma \ref{lem:compression} implies that a compression of an almost
representation (and in particular, of a representation) onto a large
subspace is again an almost representation (and that compressions
of nonsingular matrices have almost full rank), and that up to low
rank perturbation and conjugation, all $k$-dimensional compressions
of almost representations are equivalent. In particular, if all representations
have enough subrepresentations, this implies that flexible stability
is equivalent to strict stability.
\begin{cor}
\label{cor:strictflex}Let $F$ be an algebraically closed field of
characteristic zero. Then a solvable Lie algebra $\mf g$ over $K\le F$
is flexibly uniformly $F$-stable if and only if it is strictly uniformly
$F$-stable.
\end{cor}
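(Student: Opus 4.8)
The plan is to prove the nontrivial implication, that flexible uniform $F$-stability forces strict uniform $F$-stability; the reverse implication is immediate, since a genuine representation $\psi\colon\mf g\to\mf{gl}_{n}\left(F\right)$ with $d^{flex}\left(\ph,\psi\right)\le\varepsilon$ is in particular a strict $\varepsilon$-approximation (take $N=n$). So assume $\mf g$ is solvable over $K\le F$ and flexibly uniformly $F$-stable, fix $\varepsilon>0$, put $m=\dim_{K}\mf g$ and $\varepsilon'=\varepsilon/\left(1+4m\right)$. Flexible stability yields $\delta>0$ such that every $\delta$-representation $\ph\colon\mf g\to\mf{gl}_{n}\left(F\right)$ admits a genuine representation $\psi\colon\mf g\to\mf{gl}_{N}\left(F\right)$ with $N\ge n$ and $d^{flex}\left(\ph,\psi\right)\le\varepsilon'$; by Lemma~3.2 of \cite{BauerBlacharGreenfeld} we may also assume $N\le\left(1+\varepsilon'm\right)n$, hence $N-n\le\varepsilon'mn$. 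The goal is to upgrade $\psi$ to a genuine representation into $\mf{gl}_{n}\left(F\right)$ that stays close to $\ph$, with the same $\delta$ then witnessing strict stability.

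The key point, and the only place solvability is used, is that $\psi$ has a subrepresentation of dimension exactly $n$. First I would note that $\psi$ extends to a representation of $\mf g\otimes_{K}F$, which is solvable because solvability passes to scalar extensions (the derived series satisfies $\left(\mf g\otimes_{K}F\right)^{\left(i\right)}=\mf g^{\left(i\right)}\otimes_{K}F$); since $F$ is algebraically closed of characteristic zero, Lie's theorem makes $\psi$ triangularizable, so $F^{N}$ carries a complete flag $0=V_{0}\subset V_{1}\subset\dots\subset V_{N}=F^{N}$ of $\psi$-invariant subspaces. Fixing $V=V_{n}$, a basis $B_{V}$ of $V$ and a projection $p_{V}$ onto $V$, the compression $\theta\left(x\right)=\left[\psi\left(x\right)\right]_{V,B_{V},p_{V}}$ is a \emph{genuine} representation $\mf g\to\mf{gl}_{n}\left(F\right)$: it is plainly linear in $x$, and $\psi$-invariance of $V$ gives $\left[\psi\left(x\right)\psi\left(y\right)\right]_{V}=\left[\psi\left(x\right)\right]_{V}\left[\psi\left(y\right)\right]_{V}$, so $\theta$ preserves brackets; the same remains true after conjugating by any $A\in GL_{n}\left(F\right)$.

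It remains to control $d^{strict}\left(\ph,\theta\right)$ up to such a conjugation. Let $\psi'\colon\mf g\to\mf{gl}_{n}\left(F\right)$ be the compression of $\psi$ onto the standard coordinate subspace $F^{n}\le F^{N}$ (with its standard basis and projection). Since the top-left $n\times n$ block of $\widehat{\ph\left(x\right)}-\widehat{\psi\left(x\right)}$ equals $\ph\left(x\right)-\psi'\left(x\right)$, and the rank of a submatrix is at most the rank of the matrix, $rank\left(\ph\left(x\right)-\psi'\left(x\right)\right)\le\dim\,im\left(\widehat{\ph\left(x\right)}-\widehat{\psi\left(x\right)}\right)\le\varepsilon'n$, i.e. $rk\left(\ph\left(x\right)-\psi'\left(x\right)\right)\le\varepsilon'$ for all $x\in\mf g$. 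On the other hand, Lemma~\ref{lem:compression}(\ref{lem:compression2}), applied with ambient dimension $N$ and subspace dimension $n$ to the two compressions of $\psi$ onto $F^{n}$ and onto $V$, supplies $A\in GL_{n}\left(F\right)$ (independent of $x$) with $rank\left(\psi'\left(x\right)-A\theta\left(x\right)A^{-1}\right)\le4\left(N-n\right)\le4\varepsilon'mn$, hence $rk\left(\psi'\left(x\right)-A\theta\left(x\right)A^{-1}\right)\le4\varepsilon'm$. Setting $\theta'=A\theta\left(\cdot\right)A^{-1}$, a genuine representation $\mf g\to\mf{gl}_{n}\left(F\right)$, the triangle inequality gives $rk\left(\ph\left(x\right)-\theta'\left(x\right)\right)\le\varepsilon'\left(1+4m\right)=\varepsilon$ for all $x$, so $d^{strict}\left(\ph,\theta'\right)\le\varepsilon$, as required.

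The main obstacle is the middle step: one needs a small-dimensional subrepresentation of the genuine model $\psi$ to compress back down to $\mf{gl}_{n}\left(F\right)$, and this is exactly what fails for non-solvable $\mf g$, where $\psi$ may be irreducible of dimension far larger than $n$. Everything else is routine bookkeeping with the normalized rank metric and Lemma~\ref{lem:compression}; the only minor points needing care are that scalar extension preserves solvability (so that Lie's theorem applies to $\psi$ over $F$) and that the dimension blow-up from flexible stability is linearly controlled, for which one invokes Lemma~3.2 of \cite{BauerBlacharGreenfeld}.
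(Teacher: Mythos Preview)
Your proof is correct and follows essentially the same approach as the paper's: use Lie's theorem (over the algebraically closed field $F$) to obtain a complete $\psi$-invariant flag, compress $\psi$ to the $n$-dimensional invariant subspace to get a genuine representation, and then combine the $d^{flex}$ bound with Lemma~\ref{lem:compression}(\ref{lem:compression2}) and the dimension control $N\le(1+\varepsilon'm)n$ to estimate the strict distance. The only cosmetic difference is that you apply Lemma~\ref{lem:compression}(\ref{lem:compression2}) to two compressions of $\psi$, whereas the paper applies it to two compressions of $\widehat{\ph}$; the resulting bound $(1+4m)\varepsilon'$ is the same. Your explicit remark that solvability passes to the scalar extension $\mf g\otimes_{K}F$ is a point the paper leaves implicit, and your tacit assumption $N\ge n$ is harmless since the case $N<n$ is handled by padding $\psi$ with zeros.
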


\begin{proof}
Strict stability clearly implies flexible stability. Assume $\mf g$
is flexibly stable. Then for any $\delta>0$ there is an $\varepsilon>0$
such that for any $\delta$-representation $\ph:\mf g\to\mf{gl}_{n}\left(F\right)$
there is a representation $\psi:\mf g\to\mf{gl}_{N}\left(F\right)$
such that 
\[
\underset{x\in\mf g}{sup}\,rk\left(\widehat{\ph\left(x\right)}-\widehat{\psi\left(x\right)}\right)\le\varepsilon,
\]
and we may assume $N\le\left(1+\varepsilon d\right)n$, where $d$
is the number of generators of $\mf g$. By Lie's theorem, there is
a complete flag $F^{N}=V_{N}\supset V_{N-1}\supset\cdots\supset V_{0}=0$
of invariant subspaces of $\psi$ with $dim\,V_{i}=i$. Then for any
$k$ (and choice of projection and basis $p_{k},B_{k}$), the compression
\[
\left[\psi\right]_{V_{k}}=p_{V_{k}}\circ\psi\circ\iota_{V_{k}}:\mf g\to\mf{gl}_{n}\left(F\right)
\]
is a representation, and 
\[
\underset{x\in\mf g}{sup}\;rk\left(p_{V_{k}}\circ\widehat{\ph\left(x\right)}\circ\iota_{V_{k}}-\widehat{\left[\psi\right]_{V_{k}}\left(x\right)}\right)\le\underset{x\in\mf g}{sup}\;rk\left(\widehat{\ph\left(x\right)}-\widehat{\psi\left(x\right)}\right)\le\varepsilon.
\]
Thinking of the matrices as living in $\mf{gl}_{N}\left(F\right)$,
by definition of $\widehat{\ph\left(x\right)}$ it has an $n$-dimensional
compression that equals $\ph$, so by Lemma \ref{lem:compression}(\labelcref*{enu:compression2})
there is a $A\in GL_{n}\left(F\right)$ such that 
\[
rank\left(\ph\left(x\right)-A\left(p_{V_{n}}\circ\widehat{\ph\left(x\right)}\circ\iota_{V_{n}}\right)A^{-1}\right)\le4\left(N-n\right)\le4\varepsilon dn\quad\forall x\in\mf g
\]
\begin{align*}
 & \implies\underset{x\in\mf g}{sup}\;rk\left(\ph\left(x\right)-A\left[\psi\right]_{V_{n}}A^{-1}\left(x\right)\right)\\
 & \le\underset{x\in\mf g}{sup}\;rk\left(\ph\left(x\right)-A\left(p_{V_{n}}\circ\widehat{\ph\left(x\right)}\circ\iota_{V_{n}}\right)A^{-1}\right)+rk\left(A\left(p_{V_{n}}\circ\widehat{\ph\left(x\right)}\circ\iota_{V_{n}}-\widehat{\left[\psi\right]_{V_{n}}\left(x\right)}\right)A^{-1}\right)\\
 & \le\left(4d+1\right)\varepsilon,
\end{align*}
where $A\left[\psi\right]_{V_{n}}A^{-1}$ is an $n$-dimensional representation.
\end{proof}
\begin{rem}
It is interesting to note a similarity between Corollary \ref{cor:strictflex}
and strict vs. flexible pointwise stability of groups. In \cite[Proposition 6.2]{MR4579918}
it was proven that for amenable groups (and in particular solvable
groups), strict stability is equivalent to flexible stability in the
Hilbert-Schmidt norm. A similar result holds for permutation stability
as proved in \cite[Lemma 3.2]{MR4134896}.
\end{rem}

In the case of finite dimensional $\mf g$, Corollary \ref{cor:strictflex}
can be helpful only in the abelian case, as we know from \cite{BauerBlacharGreenfeld}
that nonabelian solvable Lie algebras are not flexibly stable. We
note that stability of abelian Lie algebras are equivalent to \emph{pointwise}
stability of free abelian groups, since from bi-invariance
\[
rk\left(ABA^{-1}B^{-1}-I_{n}\right)=rk\left(AB-BA\right),
\]
so the same corollary holds for free abelian groups. The proof of
Corollary \ref{cor:strictflex} is easily seen to hold also for \emph{uniform}
stability of free abelian groups. However, currently it is not even
known whether $\ZZ$ is uniformly $F$-stable for any field $F$.
By contrast, finite groups were proven to be $F$-stable in \cite[Section 6]{MallahiYekta},
for any field $F$. Pointwise and uniform stability for finite groups
are easily seen to be equivalent, similar to the case of finite dimensional
Lie algebras.
\begin{rem}
We have defined uniform stability only for individual groups/algebras.
However, it is also common to consider uniform stability for families
of groups, such as amenable groups in Kazhdan's theorem, where the
dependence of $\delta$ on $\varepsilon$ is uniform for the entire
family. The uniform rank metric stability of finite groups in \cite[Section 6]{MallahiYekta}
depends on the size of the group, and so it does not prove uniform
stability for the \emph{class} of finite groups.
\end{rem}

\subsection{Verma modules and central characters}

Let $\mf g$ be a semisimple Lie algebra over $\CC$, with a Cartan
subalgebra $\mf h$ of dimension $\ell$, the rank of $\mf g$. Denote
by $\Phi\subset\mf h^{*}$ the root system of $\mf g$ relative to
$\mf h$. To each root $\alpha$ denote by $\mf g_{\alpha}$ the corresponding
root space, and fix a set of positive roots $\Phi^{+}\subset\Phi$
and a set of simple roots $\Delta\subset\Phi^{+}$ having $\ell$
elements. This defines a Cartan decomposition
\[
\mf g=\mf n^{-}\oplus\mf h\oplus\mf n^{+},
\]
where 
\[
\mf n^{+}\vcentcolon=\bigoplus_{\alpha>0}\mf g_{\alpha}\;,\;\mf n^{-}\vcentcolon=\bigoplus_{\alpha<0}\mf g_{\alpha},
\]
with corresponding Borel subalgebras 
\[
\mf b^{+}=\mf h\oplus\mf n^{+}\;,\;\mf b^{-}=\mf h\oplus\mf n^{-}
\]

Let $\Lambda_{r}$ be the root lattice and $\Lambda$ be the integral
weight lattice in $\Phi$, and $\Lambda^{+}\sq\Lambda$ the dominant
integral weights. Let $\rho$ be the half sum of the positive roots,
also called the Weyl vector. Denote by $W$ the (finite) Weyl group
of $\Phi$, generated by the reflections $s_{\alpha}$. 

There is a standard basis for $\mf g$ consisting of $y_{1},\ldots,y_{m}\in\mf n^{-}$,
$h_{1},\ldots,h_{\ell}\in\mf h$ and $x_{1}\ldots,x_{m}\in\mf n^{+}$,
such that $x_{i}\in\mf g_{\alpha_{i}}$ for positive roots $\alpha_{i}\in\Phi^{+}$,
$y_{i}\in\mf g_{-\alpha_{i}}$, and $h_{1}=h_{\alpha_{i}}$ for simple
roots $\alpha_{i}\in\Delta$. According to the Poincaré-Birkhoff-Witt
(PBW) theorem, there is a corresponding basis for the universal enveloping
algebra $U\left(\mf g\right)$ consisting of monomials 
\[
y_{1}^{r_{1}}\cdots y_{m}^{r_{m}}h_{1}^{s_{1}}\cdots h_{\ell}^{s_{\ell}}x_{1}^{t_{1}}\cdots x_{m}^{t_{m}}
\]

A $U\left(\mf g\right)$-module is a highest weight module of weight
$\lambda\in\mf h^{*}$ if there is a vector $v_{0}\in M$ of weight
$\lambda$ such that $\mf n^{+}\cdot v_{0}=0$ and $M=U\left(\mf g\right)\cdot v_{0}$.
By the PBW theorem, such a module satisfies $M=U\left(\mf n^{-}\right)\cdot v_{0}$.
A universal highest weight module of weight $\lambda$ is called a
Verma module and is denoted by $M\left(\lambda\right)$. One may construct
it by induction from $\mf b$: Since $\mf b/\mf n$ is isomorphic
to $\mf h$, there is a $1$-dimensional representation $\CC_{\lambda}$
of $\mf b$ with trivial $\mf n$-action and $\mf h$ acting as $\lambda$.
By PBW, $U\left(\mf b\right)\le U\left(\mf g\right)$ is a subalgebra
and thus acts on $U\left(\mf g\right)$ by left multiplication. 

Set 
\[
M\left(\lambda\right)\vcentcolon=Ind_{\mf b}^{\mf g}\CC_{\lambda}=U\left(\mf g\right)\otimes_{U\left(\mf b\right)}\CC_{\lambda}
\]
Which has a structure of a left $U\left(\mf g\right)$-module with
highest weight vector $1\otimes1$. Again by the PBW theorem, $U\left(\mf g\right)\cong U\left(\mf n^{-}\right)\otimes U\left(\mf b\right)$
and thus $M\left(\lambda\right)\cong U\left(\mf n^{-}\right)\otimes\CC_{\lambda}$
as a left $U\left(\mf n^{-}\right)$-module, and is therefore a free
$U\left(\mf n^{-}\right)$-module of rank $1$. Every highest weight
module for $\lambda$ is a quotient of $M\left(\lambda\right)$, and
each $M\left(\lambda\right)$ has a unique simple quotient $L\left(\lambda\right)$
which is finite-dimensional if and only if $\lambda\in\Lambda^{+}$. 

We define the shifted action of $W$ on $\mf h^{*}$ by $w\cdot\lambda=w\left(\lambda+\rho\right)-\rho$,
and we say that $\lambda,\mu\in\mf h^{*}$ are \emph{$W$-linked }if
$\mu=w\cdot\lambda$ for some $w\in W$.

The center $Z\left(U\left(\mf g\right)\right)$ acts on a highest
weight module $M$ of weight $\lambda$ via scalars:
\[
z\cdot v=\chi_{\lambda}\left(z\right)v\quad\forall v\in M
\]
Thus $\chi_{\lambda}:Z\left(U\left(\mf g\right)\right)\to\CC$ is
an algebra homomorphism that we call the \emph{central character}
associated with $\lambda$. A fundemental theorem on central characters
is due to Harish-Chandra \cite[Theorem 1.10]{HumphreysBGG}:
\begin{thm*}
(Harish-Chandra isomorphism)
\begin{enumerate}
\item There is an isomorphism of algebras $Z\left(U\left(\mf g\right)\right)\cong S\left(\mf h\right)^{W}$
where $S\left(\mf h\right)^{W}$ is the algebra of $W$-invariant
elements of the symmetric algebra of $\mf h$.
\item For $\lambda,\mu\in\mf h^{*}$, $\chi_{\lambda}=\chi_{\mu}$ if and
only if $\mu$ and $\lambda$ are $W$-linked.
\item Every central character $\chi:Z\left(U\left(\mf g\right)\right)\to\CC$
is of the form $\chi_{\lambda}$ for some $\lambda\in\mf h^{*}$.
\end{enumerate}
The algebra $S\left(\mf h\right)^{W}$ of invariants is isomorphic
to a polynomial algebra in $\ell>0$ variables, and therefore $Z\left(U\left(\mf g\right)\right)$
is (freely) generated as an algebra by $\ell$ elements.
\end{thm*}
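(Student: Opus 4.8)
The plan is to follow the classical route through the Harish-Chandra projection. Write $U\left(\mf g\right)=U\left(\mf h\right)\oplus\left(\mf n^{-}U\left(\mf g\right)+U\left(\mf g\right)\mf n^{+}\right)$: in the PBW basis above, the monomials with all $r_{i}=t_{i}=0$ span $U\left(\mf h\right)=S\left(\mf h\right)$ and the remaining ones span the complement. Let $\pi:U\left(\mf g\right)\to S\left(\mf h\right)$ be the projection along this decomposition. For central $z$ one checks that $z$ acts on the highest weight vector $v_{0}\in M\left(\lambda\right)$ by the scalar $\lambda\left(\pi\left(z\right)\right)$, where $\pi\left(z\right)\in S\left(\mf h\right)$ is viewed as a polynomial function on $\mf h^{*}$ (the $U\left(\mf g\right)\mf n^{+}$ part of $z$ kills $v_{0}$, the $\mf n^{-}U\left(\mf g\right)$ part lands in weights strictly below $\lambda$, while $zv_{0}$ has weight $\lambda$); hence $\chi_{\lambda}\left(z\right)=\lambda\left(\pi\left(z\right)\right)$, and since each $\chi_{\lambda}$ is an algebra homomorphism and the $\lambda$ separate points of $S\left(\mf h\right)$, so is $\pi|_{Z\left(U\left(\mf g\right)\right)}$. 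Composing with the automorphism $\tau$ of $S\left(\mf h\right)$ induced by the affine shift $\lambda\mapsto\lambda-\rho$ gives $\xi\vcentcolon=\tau\circ\pi$, satisfying $\chi_{\lambda}\left(z\right)=\left(\lambda+\rho\right)\left(\xi\left(z\right)\right)$. To see $\mathrm{im}\,\xi\subseteq S\left(\mf h\right)^{W}$, fix a simple root $\alpha$ and a weight $\lambda$ with $\left\langle \lambda+\rho,\alpha^{\vee}\right\rangle$ a positive integer; then $y_{\alpha}^{\left\langle \lambda+\rho,\alpha^{\vee}\right\rangle}\otimes v_{0}\in M\left(\lambda\right)$ is a highest weight vector of weight $s_{\alpha}\cdot\lambda$, so $\chi_{\lambda}=\chi_{s_{\alpha}\cdot\lambda}$ and thus $\lambda+\rho$ and $s_{\alpha}\left(\lambda+\rho\right)$ agree on $\xi\left(z\right)$. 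Such $\lambda$ are Zariski-dense in $\mf h^{*}$, so $\xi\left(z\right)$ is $s_{\alpha}$-invariant for every simple $\alpha$, hence $W$-invariant.

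Next I would show $\xi:Z\left(U\left(\mf g\right)\right)\to S\left(\mf h\right)^{W}$ is bijective by passing to the associated graded algebras for the PBW filtration. One has $\mathrm{gr}\,U\left(\mf g\right)=S\left(\mf g\right)$; the symmetrization map $S\left(\mf g\right)\to U\left(\mf g\right)$ is a filtration-preserving $\mf g$-equivariant linear isomorphism, and since the adjoint action is semisimple it restricts to a linear isomorphism $S\left(\mf g\right)^{\mf g}\to Z\left(U\left(\mf g\right)\right)$, giving $\mathrm{gr}\,Z\left(U\left(\mf g\right)\right)=S\left(\mf g\right)^{\mf g}$. The symbol of $\xi$ is then the Chevalley restriction map $S\left(\mf g\right)^{\mf g}\to S\left(\mf h\right)^{W}$, $f\mapsto f|_{\mf h}$ (the $\rho$-shift only alters lower-degree terms), which Chevalley's restriction theorem asserts is an isomorphism; hence $\xi$ is injective, and surjective since the filtration on $S\left(\mf h\right)^{W}$ is exhaustive. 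I expect this to be the main obstacle: the real content of the isomorphism lies in producing enough central elements, i.e. in Chevalley restriction $S\left(\mf g\right)^{\mf g}\cong S\left(\mf h\right)^{W}$.

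Granting the isomorphism, the remaining parts are formal. For (2): $\chi_{\lambda}=\chi_{\mu}$ means $\lambda+\rho$ and $\mu+\rho$ induce the same character of $S\left(\mf h\right)^{W}$; since $W$ is finite, $S\left(\mf h\right)$ is module-finite over $S\left(\mf h\right)^{W}$, so by lying-over the fibres of $\mf h^{*}\to\mathrm{Specm}\,S\left(\mf h\right)^{W}$ are exactly the $W$-orbits, forcing $\mu+\rho\in W\left(\lambda+\rho\right)$, i.e. $\lambda,\mu$ are $W$-linked; the converse is immediate from $\chi_{\lambda}\left(z\right)=\left(\lambda+\rho\right)\left(\xi\left(z\right)\right)$ and $W$-invariance of $\xi\left(z\right)$. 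For (3): a character $\chi:Z\left(U\left(\mf g\right)\right)\to\CC$ corresponds under the isomorphism to a character of $S\left(\mf h\right)^{W}$, which by surjectivity of $\mf h^{*}\to\mathrm{Specm}\,S\left(\mf h\right)^{W}$ is evaluation at some $\nu\in\mf h^{*}$; then $\chi=\chi_{\nu-\rho}$.

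Finally, $W$ acts on the $\ell$-dimensional space $\mf h$ as a finite group generated by the reflections $s_{\alpha}$, so by Chevalley's theorem on finite reflection groups $S\left(\mf h\right)^{W}$ is a polynomial algebra on $\ell$ homogeneous generators; transporting along $\xi$, the centre $Z\left(U\left(\mf g\right)\right)$ is freely generated as an algebra by $\ell$ elements. In summary, the only genuinely hard inputs are Chevalley restriction $S\left(\mf g\right)^{\mf g}\cong S\left(\mf h\right)^{W}$ and the reflection-group invariant theorem; everything else is PBW bookkeeping together with the Verma-module linkage argument above.
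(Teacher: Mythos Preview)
Your proof is the standard argument and is correct, but note that the paper does not actually prove this statement: it is quoted as background with a citation to \cite[Theorem 1.10]{HumphreysBGG} and used as a black box in the proof of Theorem \ref{thm:thm9}. So there is no ``paper's own proof'' to compare against; you have supplied precisely the classical proof one finds in Humphreys, via the Harish-Chandra projection, the $\rho$-twist, Verma-module embeddings for $W$-invariance, and passage to the associated graded to reduce bijectivity to Chevalley restriction. Your identification of Chevalley restriction and the Shephard--Todd/Chevalley theorem on reflection-group invariants as the two substantive inputs is accurate.
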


\subsection{Lattices in semisimple Lie groups}

Let $G$ be a semisimple Lie group. A discrete subgroup $\Gamma\le G$
is a \emph{lattice} if $G/\Gamma$ has finite volume (i.e. there is
a $\sigma$-finite, $G$-invariant Borel measure $\nu$ on $G/\Gamma$).
Lattices possess various rigidity properties, one of the most fundemental
properties is the Borel Density Theorem. The following formulation
is taken from \cite[Corollary 4.5.6]{WitteMorrisArithmetic}.
\begin{thm*}[Borel Density]
Let $G\le SL_{r}\left(\RR\right)$ be a connected linear semisimple
Lie group and $\Gamma\le G$ a lattice. If $\Gamma$ projects densely
onto the maximal compact factor of $G$, then $\Gamma$ is Zariski
dense in $G$. That is, if $Q\in\RR\left[x_{1,1},\ldots,x_{r,r}\right]$
is a polynomial function on $M_{r}\left(\RR\right)$ such that $Q\left(\Gamma\right)=0$,
then $Q\left(G\right)=0$.
\end{thm*}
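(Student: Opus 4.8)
The plan is to run Furstenberg's dynamical proof of the Borel density theorem. Let $\mathbf{H}\vcentcolon=\overline{\Gamma}^{\,\mathrm{Zar}}$ be the Zariski closure of $\Gamma$ inside $SL_{r}$; since $\mathbf{H}$ is Zariski closed, in order to prove $Q\left(\Gamma\right)=0\Rightarrow Q\left(G\right)=0$ it is enough to show $G\le\mathbf{H}$. By Chevalley's theorem there are a finite-dimensional rational representation $\rho\colon SL_{r}\to GL\left(V\right)$ and a line $L\le V$ such that $\mathbf{H}=\operatorname{Stab}_{SL_{r}}\left(L\right)$. As $\Gamma\le\mathbf{H}$, the group $\Gamma$ fixes the point $p=\left[L\right]$ of the compact projective space $\mathbb{P}\left(V\right)$, on which $SL_{r}$---and hence $G$---acts continuously through $\rho$. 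Once we know that $G$ fixes $p$ as well, we get $G\le\operatorname{Stab}_{SL_{r}}\left(L\right)=\mathbf{H}$ and we are done. Since the Dirac mass $\delta_{p}$ is $\Gamma$-invariant, everything therefore reduces to the claim: \emph{every $\Gamma$-invariant Borel probability measure $\mu$ on $\mathbb{P}\left(V\right)$ is $G$-invariant.}

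Write $G=K\cdot L'$ as an almost direct product, where $K$ is the product of the compact simple factors and $L'$ the product of the non-compact ones; both are closed normal subgroups, and $L'$ is generated by its unipotent one-parameter subgroups. I first show that a $\Gamma$-invariant $\mu$ is $L'$-invariant. Consider the continuous, $G$-equivariant map $\Phi\colon G/\Gamma\to\operatorname{Prob}\left(\mathbb{P}\left(V\right)\right)$, $g\Gamma\mapsto g_{*}\mu$ (well defined since $\mu$ is $\Gamma$-invariant). Fix a unipotent one-parameter subgroup $\left\{ u_{t}\right\} \le L'$, say $u_{t}=\exp\left(tN\right)$ with $N$ nilpotent on $V$. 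The flow $t\mapsto u_{t}$ on $G/\Gamma$ preserves the finite $G$-invariant measure $\nu$, so by Poincar\'e recurrence the recurrent points form a conull, hence dense, set; choose a recurrent $g_{0}\Gamma$, so there are $t_{n}\to+\infty$ and $\gamma_{n}\in\Gamma$ with $u_{t_{n}}g_{0}\gamma_{n}\to g_{0}$ in $G$. Equivariance and continuity of $\Phi$ then give
\[
\left(u_{t_{n}}\right)_{*}\left(g_{0}\right)_{*}\mu=\Phi\!\left(u_{t_{n}}g_{0}\gamma_{n}\Gamma\right)\;\longrightarrow\;\Phi\left(g_{0}\Gamma\right)=\left(g_{0}\right)_{*}\mu .
\]
On the other hand an elementary computation with the one-parameter unipotent flow on $\mathbb{P}\left(V\right)$ shows $\left[u_{t}v\right]\to\left[N^{k\left(v\right)}v\right]$ as $t\to+\infty$, where $k\left(v\right)=\max\left\{ j:N^{j}v\ne0\right\} $, so by dominated convergence $\left(u_{t}\right)_{*}\left(g_{0}\right)_{*}\mu$ converges as $t\to+\infty$ to a measure supported on the fixed locus $\mathbb{P}\left(\ker N\right)$ of the flow, which is therefore $\left\{ u_{t}\right\} $-invariant. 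Comparing with the displayed subsequence forces $\left(g_{0}\right)_{*}\mu$ to equal that limit, so $\mu$ is invariant under $g_{0}^{-1}\left\{ u_{t}\right\} g_{0}$. As the recurrent $g_{0}$ are dense and $\operatorname{Stab}_{G}\left(\mu\right)$ is closed, $\operatorname{Stab}_{G}\left(\mu\right)$ contains every conjugate of every $u_{t}$, hence the normal closure of each such one-parameter subgroup; these generate $L'$, so $\mu$ is $L'$-invariant.

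Now the hypothesis on the compact factor is used. Since $K\cap L'$ is finite, $G/L'$ is a compact group onto which $\Gamma$ maps densely (its image is the image of the dense projection of $\Gamma$ to the maximal compact factor). Thus $\left\langle \Gamma,L'\right\rangle $ contains $L'$ and has dense image in $G/L'$, so it is dense in $G$. Since $\operatorname{Stab}_{G}\left(\mu\right)$ is closed and contains both $\Gamma$ and $L'$, it equals $G$; i.e.\ $\mu$ is $G$-invariant. Applying this to $\mu=\delta_{p}$ shows that $G$ fixes $p$, and by the first paragraph $G\le\mathbf{H}=\overline{\Gamma}^{\,\mathrm{Zar}}$, which is exactly Zariski density of $\Gamma$ in $G$.

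The technical heart is the second paragraph: converting the finite-volume hypothesis, via Poincar\'e recurrence on $G/\Gamma$, into a ``non-escape'' statement that can be played off against the polynomial dynamics of a unipotent flow on $\mathbb{P}\left(V\right)$ to force unipotent-invariance of $\mu$. The slightly delicate point there is that recurrence can only be applied at a single well-chosen point (the recurrent points are conull, not all of $G/\Gamma$), after which one propagates invariance to every conjugate using density of the recurrent set together with closedness of the stabiliser; the remaining bookkeeping with Zariski closures, $\RR$-points and the almost-direct-product decomposition, as well as Chevalley's theorem, is standard and is used as a black box.
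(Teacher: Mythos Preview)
The paper does not actually prove the Borel Density Theorem: it is quoted in the preliminaries section on lattices as a known result, with the formulation attributed to Witte Morris's book on arithmetic groups, and is then used as a black box (through Corollary~\ref{cor:LatticeZariski}) in the proof of Theorem~\ref{thm:Lattices}. So there is no in-paper argument to compare yours against.

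For what it is worth, your write-up is Furstenberg's dynamical proof and is correct in outline. The reduction via Chevalley to a $\Gamma$-fixed point in $\mathbb{P}(V)$, the use of Poincar\'e recurrence on $G/\Gamma$ combined with the projective dynamics of a unipotent one-parameter subgroup to force invariance of any $\Gamma$-invariant measure under the noncompact part $L'$, and then the density hypothesis on the compact factor to upgrade to full $G$-invariance, is the standard route. One small point worth making explicit: when you pass from ``the recurrent points are conull'' to ``hence dense'' you are using that the $G$-invariant probability on $G/\Gamma$ has full support (true, since it descends from Haar measure), and this is what lets you propagate from a single recurrent $g_0$ to all conjugates via closedness of $\operatorname{Stab}_G(\mu)$.
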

A remarkable theorem on lattices in higher rank is Margulis Superrigidity,
which states that in higher rank $G$ and $\Gamma\le G$ an irreducible
lattice, essentially all finite dimensional representations of $\Gamma$
can be extended to rational representations of $G$ up to some bounded
error. For noncocompact $\Gamma$, any representation can be extended
to a rational representation of $G$ on some finite index subgroup.
For cocompact lattices this is not necessarily true - there may be
unitary representations that do not factor through a finite quotient.
But we know from Margulis arithmeticity that $\Gamma$ is an arithmetic
subgroup of $G$, and therefore we may add compact factors to $G$
so that $\Gamma$ is commensurable to the integral points of $G'=G\times K$
(these compact factors are in fact Galois twistings corresponding
to the number field defining the arithmetic group $\Gamma$, see \cite[Section 5.5]{WitteMorrisArithmetic}).
Then it is true that finite dimensional representations of $\Gamma$
extend on some finite index subgroup to a representation of $G'$
(see \cite[Section 16.4]{WitteMorrisArithmetic}).
\begin{thm*}[Margulis Superrigidity]
Let $G$ be a connected, algebraically simply connected semisimple
Lie group of rank $\ge2$ without compact factors that is defined
over $\mathbb{Q}$ and let $\Gamma\le G$ be an irreducible lattice.
Then there is a compact semisimple group $K$, $G'=G\times K$ where
$\Gamma\le G'$ is an irreducible lattice projecting densely onto
$K$, such that for any representation $\ph:\Gamma\to GL_{n}\left(\CC\right)$
there is a finite index subgroup $\tilde{\Gamma}\le\Gamma$ and a
rational representation $\Phi:G'\to GL_{n}\left(\CC\right)$ such
that $\Phi\mid_{\tilde{\Gamma}}=\ph$.
\end{thm*}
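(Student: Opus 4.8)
This is the superrigidity theorem of Margulis, phrased so as to build in the compact factor $K$, and I would not reprove the full boundary-map machinery; the plan is to reduce the stated assertion to the standard superrigidity theorem together with Margulis arithmeticity. Let me first recall the standard version (see \cite[Chapter 16]{WitteMorrisArithmetic}): if $\ph:\Gamma\to GL_n(\CC)$ has image that is Zariski dense in $GL_n(\CC)$ and \emph{unbounded} in every simple factor of its Zariski closure, then $\ph$ agrees, on a finite index subgroup, with a rational representation of $G$. The proof produces a measurable $\Gamma$-equivariant ``boundary map'' $G/P\to H/Q$, where $P\le G$ is a minimal parabolic (so that $G/P$ is the Furstenberg boundary), $H=\overline{\ph(\Gamma)}^{\mathrm{Zar}}$, and $Q$ a suitable parabolic of $H$; existence uses amenability of the $\Gamma$-action on $G/P$ together with a measurable selection argument. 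The crucial point --- exactly where rank $\ge 2$ enters --- is that this map is essentially \emph{rational}: one uses ergodicity of the actions of various subgroups of $G$ on $G/P$ (Moore's theorem) and a multiplicative ergodic argument to force algebraicity, after which Zariski density of $\Gamma$ (Borel Density) promotes $\Gamma$-equivariance to a genuine rational homomorphism $G\to H$. I expect this rationality step to be the main obstacle: it is the technical heart of superrigidity and genuinely fails in rank one.

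The next step is to drop the unboundedness and Zariski density hypotheses. Replacing $GL_n(\CC)$ by $H=\overline{\ph(\Gamma)}^{\mathrm{Zar}}$ makes the image Zariski dense for free; the only obstruction left is a simple factor of $H$ modulo its solvable radical on which $\ph(\Gamma)$ is \emph{bounded}, which cannot come from $G$ itself. This is precisely where $K$ is needed, and here I would invoke Margulis arithmeticity: $\Gamma$ is arithmetic, so up to commensurability $\Gamma=\mathbf{G}'(\ZZ)$ for a $\mathbb{Q}$-group $\mathbf{G}'$ whose real points split as $G'=G\times K$, with $K$ the product of the compact real forms of $\mathbf{G}'$ at the remaining archimedean places --- the Galois twists attached to the number field defining $\Gamma$ (see \cite[Section 5.5]{WitteMorrisArithmetic}). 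Then $\Gamma$ is an irreducible lattice in $G'$ projecting densely onto $K$, and applying the standard superrigidity theorem over $G'$, whose higher-rank part is still $G$, absorbs the bounded directions into the compact factor: every $\ph:\Gamma\to GL_n(\CC)$ then extends, on a finite index subgroup $\tilde{\Gamma}\le\Gamma$, to a rational representation $\Phi:G'\to GL_n(\CC)$ with $\Phi\mid_{\tilde{\Gamma}}=\ph$.

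Finally I would remark that passing to a finite index subgroup $\tilde{\Gamma}$ is unavoidable in the noncocompact case --- it reflects the reduction theory of $\Gamma$ --- and harmless for the uses made of this theorem below, and that in the cocompact case the compact factor may genuinely be needed even for representations not factoring through a finite quotient. The only genuinely new content beyond \cite{WitteMorrisArithmetic} is the bookkeeping reduction just described, which is routine once arithmeticity is available.
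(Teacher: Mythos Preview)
The paper does not give a proof of this theorem: it is stated as a background result, with the preceding paragraph explaining informally (citing \cite[Sections 5.5 and 16.4]{WitteMorrisArithmetic}) that the compact factor $K$ arises via Margulis arithmeticity and Galois twisting, after which the standard form of superrigidity applies to $G'=G\times K$. Your sketch follows exactly this reduction and is a correct outline; there is nothing further to compare.
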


\section{Semisimple Lie algebras and Lie groups}

\subsection{Non-stability of semisimple Lie algebras}

For any finitely generated Lie algebra $\mf g$ and $\varepsilon$-representation
$\ph:\mf g\to\mf{gl}_{n}\left(\CC\right)$, we may extend $\ph$ to
all of $U\left(\mf g\right)$ and this will be a \emph{pointwise}
asymptotic representation of the associative algebra $U\left(\mf g\right)$.
This is because pointwise asymptotic representations are determined
by the action of the generators, and the generators of $\mf g$ as
a Lie algebra generate $U\left(\mf g\right)$ as an associative algebra.

Let $\mf g$ be a finite dimensional complex semisimple Lie algebra.
The following theorem shows that although $\mf g$ has only finitely
many $n$-dimensional representations, it has uncountably many $\varepsilon_{n}$-representations
(for some $\varepsilon_{n}\conv{n\to\infty}0$) that are nonequivalent:
\begin{thm}
\label{thm:thm9}Let $\mf g$ be a complex semisimple Lie algebra
of rank $\ell$ and $m=\left|\Phi^{+}\right|$ the number of positive
roots. Then there is a family 
\[
\left\{ \ph_{n}^{\lambda}:\mf g\to\mf{gl}_{k_{n}}\left(\CC\right)\st\lambda\in\mf h^{*}\right\} 
\]
of $\varepsilon_{n}$-representations, where $\varepsilon_{n}=\frac{2m^{2}}{n}$
and $k_{n}=\binom{n+m}{m}-1$, satisfying the following properties:
\begin{enumerate}
\item \label{enu:10.1}The vector space $\CC^{k_{n}}=V=\bigoplus_{\mu\in\mf h^{*}}V_{\mu}$
is a sum of weight spaces, and there is a vector $v_{0}$ of weight
$\lambda$ such that $\ph_{n}^{\lambda}\left(\mf n^{+}\right)\cdot v_{0}=0$
and $\ph_{n}^{\lambda}\left(\mf n^{-}\right)=V$. 
\item \label{enu:10.2}Let $z_{1},\ldots,z_{\ell}$ be generators of $Z\left(U\left(\mf g\right)\right)$
as an algebra and $R=\underset{i}{max}\,deg\left(z_{i}\right)$. Denote
by $\tilde{\ph}_{n}^{\lambda}$ its extension to $U\left(\mf g\right)$.
Then
\[
rk\left(\tilde{\ph}_{n}^{\lambda}\left(z_{i}\right)-\chi_{\lambda}\left(z_{i}\right)I_{k_{n}}\right)\le\left(R+1\right)\varepsilon_{n}
\]
\item \label{enu:10.3}For any $\lambda,\mu\in\mf h^{*}$that are not $W$-linked,
\begin{align*}
d^{strict}\left(\ph_{n}^{\lambda},A\ph_{n}^{\mu}A^{-1}\right) & \ge\left(\binom{R+\ell}{\ell}R\right)^{-1}\left(1-2\left(R+1\right)\varepsilon_{n}\right)\\
 & \conv{n\to\infty}\left(\binom{R+\ell}{\ell}R\right)^{-1}
\end{align*}
For any $A\in GL_{k_{n}}\left(\CC\right)$.
\item \label{enu:10.4}For any $\lambda\in\mf h^{*}$ and any representation
$\psi$ of $\mf g$ such that $\lambda$ is not $W$-linked to any
$\mu$ appearing as a highest weight of a subrepresentation of $\psi$,
\begin{align*}
d^{flex}\left(\ph_{n}^{\lambda},\psi\right) & \ge\left(\binom{R+\ell}{\ell}R\ell\right)^{-1}\left(1-\ell\left(R+m+1\right)\varepsilon_{n}\right)\\
 & \conv{n\to\infty}\left(\binom{R+\ell}{\ell}R\ell\right)^{-1}
\end{align*}
In particular, if $\lambda\in\mf h^{*}$ is not $W$-linked to any
$\mu\in\Lambda^{+}$, then $\ph_{n}^{\lambda}$ is bounded from representations
of $\mf g$, and thus $\mf g$ is not flexibly $\CC$-stable.
\end{enumerate}
\end{thm}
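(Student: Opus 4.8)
The plan is to build, for each $\lambda \in \mf h^*$, a finite-dimensional truncation of the Verma module $M(\lambda)$ and check that the Lie algebra action on this truncated space is an almost-representation, with the error controlled by how much of the module we have cut off. Concretely, recall $M(\lambda) \cong U(\mf n^-) \otimes \CC_\lambda$ is free of rank $1$ over $U(\mf n^-)$, with PBW basis given by monomials $y_1^{r_1}\cdots y_m^{r_m} \cdot v_0$. I would let $V = \ph_n^\lambda(\mf g)$-space be spanned by those PBW monomials of total degree $\le n$; since there are $\binom{n+m}{m}$ monomials in $m$ variables of degree $\le n$, and we drop the degree-$0$ vector count appropriately (or rather: the space of monomials of degree $\le n$ has dimension $\binom{n+m}{m}$, and one subtracts $1$ to match $k_n = \binom{n+m}{m}-1$ — this bookkeeping will need care, perhaps one truncates degrees $1 \le d \le n$ or degree $\le n$ minus the top, I would nail this down when writing). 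The map $\ph_n^\lambda$ sends $x \in \mf g$ to the matrix of the $U(\mf g)$-action on $M(\lambda)$ \emph{composed with the projection} back onto this truncated space $V$ (killing the part that lands in degree $> n$).

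The defect estimate is the heart of part of the construction: for $x \in \mf g$ and a basis monomial $u$ of degree $d$, $x \cdot u$ lands in degrees $\le d+1$ (for $x \in \mf n^- \oplus \mf h$, in degree $\le d$; for $x \in \mf n^+$, possibly lowering degree). The commutator error $[\ph(x),\ph(y)] - \ph([x,y])$ applied to a monomial $u$ of degree $d$ vanishes as long as $d$ is small enough that no truncation occurs — i.e. the error is supported on monomials of degree near $n$, a space of dimension $O(m \cdot \binom{n+m-1}{m-1}) = O(m^2 \cdot k_n / n)$ roughly, giving the claimed $\varepsilon_n = 2m^2/n$. I would make this precise by writing the structure-constant expansion: it suffices (by the argument already given in the excerpt reducing uniform to pointwise defect, costing a factor $m^2$) to bound $\mathrm{rk}([\ph(z_i),\ph(z_j)] - \sum_k \alpha_{ij}^k \ph(z_k))$ for basis elements $z_i$, and each such error has rank at most $2 \cdot (\text{number of degree-}n\text{ monomials})$, since raising a degree-$(n-1)$ or degree-$n$ monomial is the only place the truncation projection is not a module map. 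Then $\varepsilon_n = m^2 \cdot 2\binom{n+m-1}{m-1}/k_n \le 2m^2/n$ after estimating the binomial ratio.

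For part (1), the weight-space decomposition and the highest-weight vector $v_0 = 1 \otimes 1$ are inherited directly from $M(\lambda)$: each PBW monomial $y_1^{r_1}\cdots y_m^{r_m}v_0$ has weight $\lambda - \sum r_i \alpha_i$, and $\ph_n^\lambda(\mf n^+) v_0 = 0$ because $\mf n^+$ kills the highest weight vector in $M(\lambda)$ exactly (no truncation at degree $0$). That $\ph_n^\lambda(\mf n^-) \cdot v_0$ spans $V$ follows from $M(\lambda) = U(\mf n^-) v_0$ and the fact that $V$ is by construction the image of low-degree elements of $U(\mf n^-)$. For part (2), extend $\ph_n^\lambda$ multiplicatively along PBW monomials to get $\tilde\ph_n^\lambda$ on $U(\mf g)$; since $z_i$ acts on all of $M(\lambda)$ by the scalar $\chi_\lambda(z_i)$, the only discrepancy on $V$ comes from monomials of degree $> n - \deg(z_i)$, whose number is $O(R \cdot k_n/n)$, giving the bound $(R+1)\varepsilon_n$; again this is just counting truncated PBW monomials and invoking the $m^2$-reduction implicitly absorbed into $\varepsilon_n$.

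Parts (3) and (4) are where the central character does the separating work, and I expect \textbf{part (4) to be the main obstacle}. The idea: if $\psi$ is a genuine representation with no subrepresentation whose highest weight is $W$-linked to $\lambda$, then by Harish-Chandra, $\psi(z_i)$ and $\chi_\lambda(z_i) I$ differ substantially — more precisely, on \emph{every} irreducible subquotient of $\psi$ the operator $z_i$ acts by $\chi_\mu(z_i)$ for some $\mu$ not $W$-linked to $\lambda$, so some polynomial $P$ in the $z_i$'s of degree $\le R$ (built from the $\ell$ generators) satisfies $P(\{\chi_\mu(z_i)\}) \ne 0$ on all such $\mu$ while $P(\{\chi_\lambda(z_i)\}) = 0$. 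Hence $\tilde\psi(P)$ is invertible (nonzero on every subquotient, hence — working in a basis triangularizing the socle filtration — invertible) while $\tilde\ph_n^\lambda(P) \approx P(\{\chi_\lambda(z_i)\}) I = 0$ up to rank $\approx \deg(P) \cdot \binom{R+\ell}{\ell} \cdot (R+1)\varepsilon_n \cdot k_n$; a full-rank matrix and an almost-zero matrix cannot be rank-metric close, and tracking the polynomial degree $\deg P \le R$, the number $\binom{R+\ell}{\ell}$ of monomials of degree $\le R$ in $\ell$ variables, and the extra factor $\ell$ from flexible-distance bookkeeping (comparing compressions of $\ph$ and $\psi$ of possibly different sizes, via Lemma \ref{lem:compression}) yields exactly the stated bound $(\binom{R+\ell}{\ell} R \ell)^{-1}(1 - \ell(R+m+1)\varepsilon_n)$. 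The delicate points are: (a) producing the separating polynomial $P$ with controlled degree — one must show that since the finitely many (up to $W$-linkage) relevant central characters form a set avoiding $\chi_\lambda$, and $Z(U(\mf g))$ is a polynomial ring in $\ell$ generators of degree $\le R$, there is such a $P$; here one may need $P$ to be, e.g., a single coordinate difference $z_i - \chi_\lambda(z_i)$ won't separate all $\mu$ at once, so $P = \prod$ over a spanning set or a clever linear combination, and bounding its degree by $R$ (not $R \cdot \#\text{characters}$) requires the characters of $\psi$ to be finite in number but the degree bound to come from a single generator evaluated suitably — I suspect the actual argument uses that $\tilde\psi(z_i) - \chi_\lambda(z_i)I$ is \emph{injective on the $\lambda$-generalized-eigenspace-free part}, so one argues block by block rather than with one global $P$); and (b) converting "$\tilde\psi(P)$ invertible, $\tilde\ph(P)$ almost zero" into a lower bound on $d^{flex}(\ph_n^\lambda, \psi)$ — this uses that the rank-metric distance is submultiplicative-ish under the algebra extension (if $\ph \approx \psi$ in rank metric as linear maps on $\mf g$, then $\tilde\ph(w) \approx \tilde\psi(w)$ for any fixed word $w$, with error multiplied by $\mathrm{len}(w)$), which I would prove by a short induction on word length using $\mathrm{rk}(AB - A'B') \le \mathrm{rk}(A-A') + \mathrm{rk}(B-B')$.
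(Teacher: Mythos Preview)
Your construction of $\ph_n^\lambda$ as a degree-$\le n$ truncation of the Verma module $M(\lambda)$, the defect estimate via counting boundary monomials, and the arguments for parts (1)--(3) all match the paper's proof essentially line for line (the paper verifies $\mf n^+ \cdot D_{n-1} \subseteq D_n$ by a short induction on $n$, and for (3) uses exactly the single generator $z_j$ on which $\chi_\lambda$ and $\chi_\mu$ differ, together with the word-length bound you mention, cited there as \cite[Lemma 3.3]{BauerBlacharGreenfeld}).

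Part (4) is where your proposal diverges, and the gap you yourself flag is real: a single polynomial $P$ in the $z_i$ that separates $\chi_\lambda$ from \emph{every} $\chi_{\mu_j}$ appearing in $\psi$ cannot have degree bounded independently of the number of isotypic components of $\psi$, which can be as large as $\dim\psi$. Your ``block by block'' instinct is in the right direction but does not yet give an argument.

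The paper avoids this entirely by a kernel-intersection trick. Set $\delta = d^{flex}(\ph_n^\lambda,\psi)$ and for each $i=1,\dots,\ell$ let
\[
W_i = \ker\bigl(\tilde\psi(z_i) - \chi_\lambda(z_i) I_N\bigr).
\]
From part (2) and the word-length bound applied to the single element $z_i$ (degree $\le R$, at most $\binom{R+\ell}{\ell}$ PBW monomials), one gets
\[
\mathrm{codim}\,W_i \;\le\; N\Bigl(\tbinom{R+\ell}{\ell}R\,\delta + (R+m+1)\varepsilon_n\Bigr).
\]
Now the key point: $\bigcap_{i=1}^\ell W_i = 0$. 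Indeed, since $\psi$ is a genuine representation, each $\tilde\psi(z_i)$ acts as the scalar $\chi_{\mu_j}(z_i)$ on the isotypic component $\psi_{\mu_j}$; a nonzero vector in the intersection would therefore lie in a single component with $\chi_{\mu_j}(z_i)=\chi_\lambda(z_i)$ for all $i$, forcing $\chi_{\mu_j}=\chi_\lambda$ and hence (Harish-Chandra) $\mu_j$ $W$-linked to $\lambda$, contrary to hypothesis. But then
\[
0 \;=\; \dim\Bigl(\bigcap_{i=1}^\ell W_i\Bigr) \;\ge\; N - \sum_{i=1}^\ell \mathrm{codim}\,W_i \;\ge\; N\Bigl(1 - \ell\bigl(\tbinom{R+\ell}{\ell}R\,\delta + (R+m+1)\varepsilon_n\bigr)\Bigr),
\]
which rearranges to the stated lower bound on $\delta$. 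The extra factor $\ell$ in the denominator is thus the number of kernels intersected, not a polynomial-degree artifact; no global separating polynomial is ever constructed.
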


The maps $\ph_{n}^{\lambda}$ are constructed via finite dimensional
compressions of infinite dimensional weight representations of $\mf g$
onto subspaces that are ``almost invariant''. Looking at weight
diagrams of such infinite dimensional representations, it is apparent
that $\mf g$ acts ``locally'' on the weight spaces, so that if
one takes a large ball in $\mf{h^{*}}$, the relations of $\mf g$
are satisfied on the interior and not on the boundary, which can be
shown to be small with respect to the whole subspace. One can instead
note that $U\left(\mf g\right)$ has polynomial growth and use this
to construct the almost invariant subspaces taking polynomials up
to a given degree in quotients of the $\mf g$-module $U\left(\mf g\right)$.
\begin{proof}
We construct $\ph_{n}^{\lambda}$ by compressing the Verma module
$M\left(\lambda\right)$ to a finite dimensional subspace. Recall
that $M\left(\lambda\right)$ is free as a $U\left(\mf n^{-}\right)$-module,
and therefore has a basis of the form 
\[
M\left(\lambda\right)=span\left\{ y_{1}^{r_{1}}\cdots y_{m}^{r_{m}}\cdot v_{0}\st r_{i}\in\ZZ_{\ge0}\right\} ,
\]
where $y_{1},\ldots,y_{m}$ are the negative root spaces and $v_{0}$
the highest weight vector. For $n\in\NN$, Denote by 
\[
D_{n}=span\left\{ y_{1}^{r_{1}}\cdots y_{m}^{r_{m}}\cdot v_{0}\st\sum_{i=1}^{m}r_{i}\le n\right\} 
\]
the subspace spanned by all monomials in $U\left(\mf n^{-}\right)$
of degree $\le n$ acting on $v_{0}$. A standard partition argument
shows that there are $\binom{n+m-1}{m-1}$ such combinations for fixed
$a$, and
\[
dim\left(D_{n}\right)=\sum_{j=1}^{n}\binom{j+m-1}{m-1}=\binom{n+m}{m}-1.
\]

Define the action of $\mf g$ on $D_{n}$ by
\[
\left(\ph_{n}^{\lambda}\left(x\right)\right)\left(y_{1}^{r_{1}}\cdots y_{m}^{r_{m}}\cdot v_{0}\right)=\begin{cases}
xy_{1}^{r_{1}}\cdots y_{m}^{r_{m}}\cdot v_{0} & xy_{1}^{r_{1}}\cdots y_{m}^{r_{m}}\cdot v_{0}\in D_{n}\\
0 & else
\end{cases}.
\]
We claim that the image of this action of $\mf g$ on $D_{n-1}$ is
contained in $D_{n}$; For $\mf n^{-}$, after reordering using a
PBW basis for $\mf n^{-}$ we have a sum of monomials of degree at
most $n$. The action of $\mf h$ is by scalars and therefore any
$D_{k}$ is $\mf h$-invariant. We prove by induction on $n$ that
$\mf n^{+}\cdot D_{n}\sq D_{n}$: When $n=0$ this is obvious. Assume
it holds for $n-1$, then for $x\in\mf g_{\beta}$ ($\beta>0$) we
have 
\[
xy_{1}^{r_{1}}y_{2}^{r_{2}}\cdots y_{m}^{r_{m}}\cdot v_{0}=y_{1}xy_{1}^{r_{1}-1}y_{2}^{r_{2}}\cdots y_{m}^{r_{m}}\cdot v_{0}+zy_{1}^{r_{1}-1}y_{2}^{r_{2}}\cdots y_{m}^{r_{m}}\cdot v_{0}
\]
for some $z\in\mf g_{-\alpha_{1}+\beta}$. By the induction hypothesis,
$xy_{1}^{r_{1}-1}y_{2}^{r_{2}}\cdots y_{m}^{r_{m}}\cdot v_{0}\in D_{n-1}\implies y_{1}xy_{1}^{r_{1}-1}y_{2}^{r_{2}}\cdots y_{m}^{r_{m}}\cdot v_{0}\in D_{n}$.
Moreover, if $\beta-\alpha_{1}>0$ then again use the induction hypothesis
for $zy_{1}^{r_{1}-1}y_{2}^{r_{2}}\cdots y_{m}^{r_{m}}\cdot v_{0}$,
and if $\beta-\alpha_{1}\le0$ then $z\in\mf n^{-}$ or $z\in\mf h$
and in any case $zy_{1}^{r_{1}-1}y_{2}^{r_{2}}\cdots y_{m}^{r_{m}}\cdot v_{0}\in D_{n}$. 

Now since $\ph_{n}^{\lambda}\left(\mf g\right)\cdot D_{n-1}\sq D_{n}$,
we have $\left[\ph_{n}^{\lambda}\left(x\right),\ph_{n}^{\lambda}\left(y\right)\right]\cdot D_{n-2}\sq D_{n}$
for any $x,y\in\mf g$. Since $\ph_{n}^{\lambda}$ acts as $M\left(\lambda\right)$
when restriced to $D_{n-2}$, we have $\left(\ph_{n}^{\lambda}\left(\left[x,y\right]\right)-\left[\ph_{n}^{\lambda}\left(x\right),\ph_{n}^{\lambda}\left(y\right)\right]\right)\cdot D_{n-2}=0$.
Thus 
\begin{align*}
rank\left(\ph_{n}^{\lambda}\left(\left[x,y\right]\right)-\left[\ph_{n}^{\lambda}\left(x\right),\ph_{n}^{\lambda}\left(y\right)\right]\right) & \le dim\,D_{n}-dim\,D_{n-2}=\binom{n+m-1}{n}+\binom{n+m-2}{n}\\
\implies\varepsilon_{n}\le\frac{\binom{n+m-1}{n}+\binom{n+m-2}{m-1}}{\binom{n+m}{m}} & =\frac{m}{n+m}\left(1+\frac{mn}{\left(n+m-1\right)}\right)\le\frac{m}{n+m}\left(1+m\right)<\frac{2m^{2}}{n}.
\end{align*}
All of the generators $z_{1},\ldots,z_{\ell}$ of $Z\left(U\left(\mf g\right)\right)$
have degree at most $R$, so when restricted to $D_{n-R}$ they act
as the scalars $\chi_{\lambda}\left(z_{i}\right)$. Thus
\begin{align*}
rank\left(\tilde{\ph}_{n}^{\lambda}\left(z_{i}\right)-\chi_{\lambda}\left(z_{i}\right)I_{k_{n}}\right) & \le dimD_{n}-dimD_{n-R}\\
 & =\binom{n+m}{m}-\binom{n-R+m}{m}
\end{align*}
\begin{align*}
 & \implies rk\left(\tilde{\ph}_{n}^{\lambda}\left(z_{i}\right)-\chi_{\lambda}\left(z_{i}\right)I_{k_{n}}\right)\le1-\frac{\binom{n-R+m}{m}}{\binom{n+m}{m}}\le1-\frac{\left(n-R\right)^{m}}{\left(n+m\right)^{m}}.
\end{align*}
Using the inequality $log\left(1+x\right)\ge\frac{x}{1+x}$, 
\[
log\left(\frac{\left(n-R\right)^{m}}{\left(n+m\right)^{m}}\right)=mlog\left(1-\frac{m+R}{n+m}\right)\ge-m\frac{m+R}{n-R},
\]
so that
\begin{align*}
rk\left(\tilde{\ph}_{n}^{\lambda}\left(z_{i}\right)-\chi_{\lambda}\left(z_{i}\right)I_{k_{n}}\right) & \le1-exp\left(-m\frac{m+R}{n-R}\right)\le m\frac{m+R}{n-R}\\
 & <m\frac{m+2R}{n}<\left(R+1\right)\varepsilon_{n}.
\end{align*}
This proves \ref{enu:10.2}. 

For \ref{enu:10.3}, by Harish-Chandra's theorem $\chi_{\lambda}\neq\chi_{\mu}$
so that they differ on at least one of the generators, say $z_{j}$.
Therefore, for any $A\in GL_{k_{n}}\left(\CC\right)$, 
\begin{align*}
rk\left(\tilde{\ph}_{n}^{\lambda}\left(z_{j}\right)-A\tilde{\ph}_{n}^{\mu}\left(z_{j}\right)A^{-1}\right) & \ge rk\left(\chi_{\lambda}\left(z_{j}\right)-A\chi_{\mu}\left(z_{j}\right)A^{-1}\right)-2\left(R+1\right)\varepsilon_{n}\\
 & =1-2\left(R+1\right)\varepsilon_{n}.
\end{align*}
Let $\eta=\ph_{n}^{\lambda}-A\ph_{n}^{\mu}A^{-1}$ and $\delta=\underset{x\in\mf g}{max}\;rk\left(\eta\left(x\right)\right)$.
Every $z_{i}$ is a polynomial of degree $\le R$ in the PBW basis,
so it is a sum of at most $\binom{R+\ell}{\ell}-1$ monomials of degree
at most $R$. By \cite[Lemma 3.3]{BauerBlacharGreenfeld} we therefore
have
\[
rk\left(\tilde{\ph}_{n}^{\lambda}\left(z_{j}\right)-A\tilde{\ph}_{n}^{\mu}\left(z_{j}\right)A^{-1}\right)\le\binom{R+\ell}{\ell}R\delta.
\]
Combining the above we get
\[
1-2\left(R+1\right)\varepsilon_{n}\le\binom{R+\ell}{\ell}R\delta
\]
\[
\implies\delta\ge\left(\binom{R+\ell}{\ell}R\right)^{-1}\left(1-2\left(R+1\right)\varepsilon_{n}\right),
\]
proving \ref{enu:10.3}.

Let $\psi:\mf g\to\mf{gl}_{N}\left(\CC\right)$ be a representation,
where $N\le\left(1+\varepsilon_{n}m\right)k_{n}$, and suppose that
$\lambda\in\mf h^{*}$is not $W$-linked to any $\mu\in\mf h^{*}$
appearing as a highest weight of a subrepresentation of $\psi$. By
complete reducibility, it is a (conjugation of) a direct sum of isotypic
components
\[
\psi=A\left(\bigoplus_{i=1}^{q}\psi_{\mu_{i}}\right)A^{-1},
\]
where $\psi_{\mu_{i}}$ is the direct sum of copies of the irreducible
representation with highest weight $\mu_{i}$ (which must in particular
be dominant integral weights). The central character of $\psi$ is
therefore
\[
\chi_{\psi}=A\left(\bigoplus_{i=1}^{q}\chi_{\mu_{i}}I_{d_{i}}\right)A^{-1}.
\]

Let $\eta=\widehat{\ph_{n}^{\lambda}}-\widehat{\psi}$ and $\delta=\underset{x\in\mf g}{max}\;rk\left(\eta\left(x\right)\right)$.
Similar to the calculation above, for each $1\le i\le\ell$ we have
\begin{align*}
rk\left(\tilde{\psi}\left(z_{i}\right)-\chi_{\lambda}\left(z_{i}\right)I_{k_{n}}\right) & \le rk\left(\tilde{\psi}\left(z_{i}\right)-A\tilde{\ph}_{n}^{\lambda}\left(z_{j}\right)A^{-1}\right)+rk\left(A\tilde{\ph}_{n}^{\lambda}\left(z_{j}\right)A^{-1}-\chi_{\lambda}\left(z_{i}\right)I_{k_{n}}\right)\\
 & \le\binom{R+\ell}{\ell}R\delta+\left(R+1\right)\varepsilon_{n}.
\end{align*}
Since $N-k_{n}\le\varepsilon_{n}m$, we have 
\begin{align*}
rk\left(\tilde{\psi}\left(z_{i}\right)-\chi_{\lambda}\left(z_{i}\right)I_{N}\right) & \le\binom{R+\ell}{\ell}R\delta+\left(R+1\right)\varepsilon_{n}+\left(N-k_{n}\right)\\
 & \le\binom{R+\ell}{\ell}R\delta+\left(R+m+1\right)\varepsilon_{n}.
\end{align*}
Thinking of $\widehat{\ph_{n}^{\lambda}},\psi$ as embedded in $\mf{gl}_{N}\left(\CC\right)$,
the subspaces $W_{i}=ker\left(\tilde{\psi}\left(z_{i}\right)-\chi_{\lambda}\left(z_{i}\right)I_{N}\right)\le\CC^{N}$
have dimension at least $k_{n}\left(1-\binom{R+\ell}{\ell}R\delta-\left(R+m+1\right)\varepsilon_{n}\right)$
and for any $v\in W$,
\[
\tilde{\psi}\left(z_{i}\right)v=\chi_{\lambda}\left(z_{i}\right)v,
\]
hence $\tilde{\psi}\left(z_{i}\right)$ has an eigenspace of dimension
at least $k_{n}\left(1-\binom{R+\ell}{\ell}R\delta-\left(R+m+1\right)\varepsilon_{n}\right)$
corresponding to the eigenvalue $\chi_{\lambda}\left(z_{i}\right)$.
Suppose the intersection
\[
W=\bigcap_{i=1}^{\ell}W_{i}
\]
contains a nonzero vector $0\neq w\in W$, then $w$ satisfies for
any $1\le i\le\ell$
\[
\tilde{\psi}\left(z_{i}\right)w=\chi_{\lambda}\left(z_{i}\right)w,
\]
and in particular it follows that $w$ must be in some isotypic component
of $\psi$. This is a contradiction because $\chi_{\lambda}\neq\chi_{\mu_{i}}$
by assumption and Harish-Chandra's theorem, so $W=0$. But since all
$W_{i}$ are subspaces of $\CC^{N}$,
\begin{align*}
0=dim\,W & \ge N-\sum_{i=1}^{\ell}\left(N-dim\,W_{i}\right)\ge N-\ell k_{n}\left(\binom{R+\ell}{\ell}R\delta+\left(R+m+1\right)\varepsilon_{n}\right)\\
 & \ge N\left(1-\ell\left(\binom{R+\ell}{\ell}R\delta+\left(R+m+1\right)\varepsilon_{n}\right)\right)
\end{align*}
\begin{align*}
 & \implies1\le\ell\left(\binom{R+\ell}{\ell}R\delta\right)+\ell\left(R+m+1\right)\varepsilon_{n}\\
 & \delta\ge\left(\binom{R+\ell}{\ell}R\ell\right)^{-1}\left(1-\ell\left(R+m+1\right)\varepsilon_{n}\right)
\end{align*}
\end{proof}
By complexifying real Lie algebras, we can prove similar results for
real Lie algebras.
\begin{prop}
\label{prop:10}Let $\mf g$ be a finite dimensional real Lie algebra
and $\ph:\mf g\to\mf{gl}_{n}\left(\CC\right)$ an $\varepsilon$-representation. 

Then there is a $4\varepsilon$-representation $\tilde{\ph}:\mf g_{\CC}\to\mf{gl}_{n}\left(\CC\right)$
extending $\ph$. Moreover, if $d^{strict}\left(\ph,\psi\right)=\delta$
then $d^{strict}\left(\tilde{\ph},\tilde{\psi}\right)\le2\delta$
(and similarly for $d^{flex}$).
\end{prop}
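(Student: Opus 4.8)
The plan is to use the standard decomposition $\mf g_{\CC}=\mf g\otimes_{\RR}\CC=\mf g\oplus i\mf g$ and the forced $\CC$-linear extension. Given an $\varepsilon$-representation $\ph:\mf g\to\mf{gl}_n(\CC)$, I define $\tilde\ph(x+iy)\vcentcolon=\ph(x)+i\ph(y)$ for $x,y\in\mf g$; this is the unique $\CC$-linear map restricting to $\ph$. To bound its defect I would write a general pair of elements of $\mf g_{\CC}$ as $u=x_1+ix_2$, $v=y_1+iy_2$ with $x_i,y_i\in\mf g$, expand the bracket bilinearly,
\[
[\tilde\ph(u),\tilde\ph(v)]-\tilde\ph([u,v])=\sum_{\pm}(\pm 1)\,\big([\ph(x_i),\ph(y_j)]-\ph([x_i,y_j])\big),
\]
where the sum has four terms (with coefficients $1$ or $i$ coming from the bilinear expansion of both $[\cdot,\cdot]$ in $\mf g_{\CC}$ and the extension formula). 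Since $rk$ is invariant under multiplication by the nonzero scalar $i$ and subadditive, each of the four summands has rank metric at most $\varepsilon$, so $rk$ of the total is at most $4\varepsilon$, giving the $4\varepsilon$-representation claim. (One should double-check the bookkeeping: the bracket on $\mf g_{\CC}$ of $x_1+ix_2$ and $y_1+iy_2$ is $([x_1,y_1]-[x_2,y_2])+i([x_1,y_2]+[x_2,y_1])$, and $\tilde\ph$ applied to this is $\ph$ of the real part plus $i\,\ph$ of the imaginary part; matching against $[\ph(x_1)+i\ph(x_2),\ph(y_1)+i\ph(y_2)]$ and collecting yields exactly the four defect terms above.)

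For the distance statement, suppose $\psi:\mf g\to\mf{gl}_n(\CC)$ is a genuine representation with $d^{strict}(\ph,\psi)=\delta$, and let $\tilde\psi$ be its (genuine) $\CC$-linear extension to $\mf g_{\CC}$, which is again a homomorphism. For $z=x+iy\in\mf g_{\CC}$,
\[
\tilde\ph(z)-\tilde\psi(z)=\big(\ph(x)-\psi(x)\big)+i\big(\ph(y)-\psi(y)\big),
\]
so by subadditivity and scalar-invariance of $rk$, $rk(\tilde\ph(z)-\tilde\psi(z))\le rk(\ph(x)-\psi(x))+rk(\ph(y)-\psi(y))\le 2\delta$. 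Taking the supremum over $z$ gives $d^{strict}(\tilde\ph,\tilde\psi)\le 2\delta$. The same computation works verbatim for $d^{flex}$, since there the matrices $\widehat{\tilde\ph(z)}$ and $\widehat{\tilde\psi(z)}$ still decompose as a real part plus $i$ times an imaginary part in $\CC^{\oplus\NN}$, and the rank of a sum is at most the sum of ranks while multiplication by $i$ is invertible; one only needs that $\tilde\ph$ and $\tilde\psi$ have the same target dimension $n$ (resp. that $\psi$ may land in a larger $\mf{gl}_N$, which passes unchanged to $\tilde\psi$).

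There is no serious obstacle here: the only point requiring care is the constant bookkeeping in the defect estimate — making sure the factor is genuinely $4$ and not, say, $2$ or $8$, which comes down to counting how many genuinely distinct defect terms $[\ph(x_i),\ph(y_j)]-\ph([x_i,y_j])$ appear after fully expanding the complex bilinear bracket and grouping the real and imaginary parts. Everything else is just linearity of the extension, bi-invariance of the rank metric, its subadditivity, and its invariance under scaling by $i$.
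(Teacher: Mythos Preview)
Your proposal is correct and follows essentially the same approach as the paper: define $\tilde\ph(x+iy)=\ph(x)+i\ph(y)$, expand the bracket bilinearly into four defect terms to get the $4\varepsilon$ bound, and split $\tilde\ph(z)-\tilde\psi(z)$ into real and imaginary parts for the $2\delta$ distance bound. The paper's proof is exactly this computation with no additional ingredients.
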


\begin{proof}
Define $\tilde{\ph}$ in the obvious way by $\tilde{\ph}\left(x+iy\right)=\ph\left(x\right)+i\ph\left(y\right)$.
Then 
\[
\left[\tilde{\ph}\left(x+iy\right),\tilde{\ph}\left(w+iv\right)\right]=\left[\ph\left(x\right),\ph\left(w\right)\right]-\left[\ph\left(y\right),\ph\left(v\right)\right]+i\left(\left[\ph\left(x\right),\ph\left(v\right)\right]+\left[\ph\left(y\right),\ph\left(w\right)\right]\right)
\]
\[
\tilde{\ph}\left(\left[x+iy,w+iv\right]\right)=\ph\left(\left[x,w\right]\right)-\ph\left(\left[y,v\right]\right)+i\left(\ph\left(\left[x,v\right]\right)+\ph\left(\left[y,w\right]\right)\right),
\]
and it follows that $\tilde{\ph}$ is a $4\varepsilon$-representation. 

If $\psi$ is a representation of $\mf g$ such that $d^{strict}\left(\ph,\psi\right)=\delta$
(respectively $d^{flex}\left(\ph,\psi\right)=\delta$) then for any
$x+iy\in\mf g_{\CC}$,
\[
rk\left(\tilde{\ph}\left(x+iy\right)-\tilde{\psi}\left(x+iy\right)\right)\le rk\left(\ph\left(x\right)-\psi\left(x\right)\right)+rk\left(i\left(\ph\left(y\right)-\psi\left(y\right)\right)\right)\le2\delta
\]
\end{proof}
\begin{cor}
\label{cor:real}A real Lie algebra $\mf g$ is (flexibly) $\CC$-stable
if and only if $\mf g_{\CC}$ is (flexibly) $\CC$-stable (as a complex
Lie algebra). In particular, by Theorem \ref{thm:thm9}, real semisimple
Lie algebras are not flexibly $\CC$-stable.
\end{cor}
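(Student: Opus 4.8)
The plan is to obtain both implications directly from Proposition \ref{prop:10}, using two elementary facts: a $\CC$-linear map out of $\mf g_{\CC}$ is completely determined by its restriction to the real form $\mf g$, and a genuine (real-linear) representation $\psi\colon\mf g\to\mf{gl}_{N}(\CC)$ extends uniquely to a genuine complex representation $\tilde\psi\colon\mf g_{\CC}\to\mf{gl}_{N}(\CC)$ via $\tilde\psi(x+iy)=\psi(x)+i\psi(y)$ (this is the complexification of $\psi$, and is a Lie algebra homomorphism because the bracket on $\mf g_{\CC}$ is $\CC$-bilinear). I will run the strict and flexible cases in parallel, writing $d$ for either $d^{strict}$ or $d^{flex}$; complexification never changes the matrix size, so in the flexible case the dimension bookkeeping (and the reduction of Lemma 3.2 of \cite{BauerBlacharGreenfeld}) is unaffected.

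For the direction ``$\mf g_{\CC}$ stable $\Rightarrow$ $\mf g$ stable'': given $\varepsilon>0$, pick $\delta'>0$ witnessing stability of $\mf g_{\CC}$ for $\varepsilon$, and set $\delta=\delta'/4$. Given a $\delta$-representation $\ph\colon\mf g\to\mf{gl}_{n}(\CC)$, Proposition \ref{prop:10} yields the $4\delta=\delta'$-representation $\tilde\ph\colon\mf g_{\CC}\to\mf{gl}_{n}(\CC)$, whence stability of $\mf g_{\CC}$ gives a genuine representation $\Psi\colon\mf g_{\CC}\to\mf{gl}_{N}(\CC)$ with $d(\tilde\ph,\Psi)\le\varepsilon$. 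Then $\psi\vcentcolon=\Psi|_{\mf g}$ is a genuine representation of $\mf g$, and since $\tilde\ph$ restricts to $\ph$ we have $rk(\ph(x)-\psi(x))=rk(\tilde\ph(x)-\Psi(x))$ for all $x\in\mf g$, so $d(\ph,\psi)\le d(\tilde\ph,\Psi)\le\varepsilon$.

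For the direction ``$\mf g$ stable $\Rightarrow$ $\mf g_{\CC}$ stable'': given $\varepsilon>0$, pick $\delta>0$ witnessing stability of $\mf g$ for the parameter $\varepsilon/2$. Given a $\delta$-representation $\Phi\colon\mf g_{\CC}\to\mf{gl}_{n}(\CC)$, its restriction $\ph\vcentcolon=\Phi|_{\mf g}$ is again a $\delta$-representation (the defect inequality for $x,y\in\mf g$ is a special case of the one for $\mf g_{\CC}$), so stability of $\mf g$ produces a genuine representation $\psi\colon\mf g\to\mf{gl}_{N}(\CC)$ with $d(\ph,\psi)\le\varepsilon/2$; let $\tilde\psi$ be its complexification. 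Because $\Phi$ is $\CC$-linear, $\Phi(x+iy)=\ph(x)+i\ph(y)$, i.e.\ $\Phi$ is exactly the map $\tilde\ph$ of Proposition \ref{prop:10}, and that proposition gives $d(\Phi,\tilde\psi)=d(\tilde\ph,\tilde\psi)\le 2\,d(\ph,\psi)\le\varepsilon$. This proves the equivalence.

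For the ``in particular'': if $\mf g$ is real semisimple then $\mf g_{\CC}$ is complex semisimple (the Killing form is nondegenerate over $\RR$ iff it is over $\CC$), so Theorem \ref{thm:thm9} shows $\mf g_{\CC}$ is not flexibly $\CC$-stable, and the first implication then forces $\mf g$ not to be flexibly $\CC$-stable. I do not expect a genuine obstacle: all the real work sits in Proposition \ref{prop:10}, and the only points that need a moment of care are that $d$ is literally unchanged under ``restrict a $\CC$-linear map to $\mf g$'' and under ``complexify a genuine real representation'', and that the constants $2$ and $4$ from Proposition \ref{prop:10} are absorbed correctly when choosing $\delta$ in terms of $\varepsilon$.
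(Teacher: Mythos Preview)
Your proposal is correct and follows essentially the same route as the paper's proof: restrict/complexify and invoke Proposition~\ref{prop:10}, with the constants $4$ and $2$ absorbed just as the paper does (you are merely more explicit about the $\varepsilon$--$\delta$ quantifiers and about why a $\CC$-linear $\Phi$ coincides with the complexification $\tilde\ph$ of its restriction). One trivial slip in the last paragraph: to pass from ``$\mf g_{\CC}$ not flexibly stable'' to ``$\mf g$ not flexibly stable'' you need the contrapositive of your \emph{second} implication ($\mf g$ stable $\Rightarrow$ $\mf g_{\CC}$ stable), not the first.
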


\begin{proof}
If $\mf g$ is (flexibly) stable, then any $\varepsilon$-representation
$\tilde{\ph}$ of $\mf g_{\CC}$ restricts to an $\varepsilon$-representation
$\ph$ of $\mf g$, which satisfies $d^{strict}\left(\ph,\psi\right)\le\delta$
(respectively $d^{flex}\left(\ph,\psi\right)\le\delta$) for some
representation $\psi$. Complexifying both, by Proposition \ref{prop:10}
$\tilde{\psi}$ is a representation of $\mf g_{\CC}$ satisfying $d^{strict}\left(\tilde{\psi},\tilde{\ph}\right)\le2\delta$
(respectively $d^{flex}\left(\ph,\psi\right)\le2\delta$).

Conversely, if $\mf g_{\CC}$ is (flexibly) stable, then an $\frac{\varepsilon}{4}$-representation
$\ph$ of $\mf g$ can be extended by Proposition \ref{prop:10} to
an $\varepsilon$-representation $\tilde{\ph}$ of $\mf g_{\CC}$.
Hence there is a representation $\tilde{\psi}$ of $\mf g_{\CC}$
satisfying $d^{strict}\left(\tilde{\ph},\tilde{\psi}\right)\le\delta$
(respectively $d^{flex}\left(\tilde{\ph},\tilde{\psi}\right)\le\delta$),
and restricting to $\mf g$ we have a representation $\psi$ of $\mf g$
with $d^{strict}\left(\ph,\psi\right)\le\delta$ (respectively $d^{flex}\left(\ph,\psi\right)\le\delta$).
\end{proof}
Notice that the proof of Theorem \ref{thm:thm9}(\ref{enu:10.4})
only used the central character of $\ph_{n}^{\lambda}$, which depends
solely on $\lambda$. If we consider a sequence of \emph{actual} representations
$\ph_{n}^{\lambda_{n}}:\mathfrak{g}\to M_{k_{n}}\left(\CC\right)$
with highest weight $\lambda_{n}$ (and corresponding central character),
then by an argument similar to the proof of Theorem \ref{thm:thm9}(\ref{enu:10.4})
we can conclude that it is bounded from any other representations
(up to conjugation). 
\begin{prop}
\label{prop:rigidity}Let $\psi_{1}:\mathfrak{g}\to\mf{gl}_{n}\left(\CC\right)$
be a representation with highest weight $\lambda$, and let $\psi_{2}:\mathfrak{g}\to\mf{gl}_{n}\left(\CC\right)$
be another representation such that $d^{strict}\left(\psi_{1},\psi_{2}\right)<\left(\binom{R+\ell}{\ell}R\ell\right)^{-1}.$

Then $\psi_{1}$ and $\psi_{2}$ are conjugate.
\end{prop}

\begin{proof}
Suppose $\psi_{1}$ and $\psi_{2}$ are not conjugate. Then $\psi_{2}$
must be a direct sum of irreducible subrepresentations, non of which
can have $\lambda$ as highest weight. Arguing similarly to the proof
of Theorem \ref{thm:thm9}(\ref{enu:10.4}), we therefore have 
\[
d^{strict}\left(\psi_{1},\psi_{2}\right)\ge\left(\binom{R+\ell}{\ell}R\ell\right)^{-1}.
\]
\end{proof}

\subsection{Semisimple Lie groups and lattices}

One of the motivations to consider stability of Lie algebras is that
almost representations arise naturally as differentials of (smooth)
almost representations of Lie groups. Unlike matrix norms, the rank
metric is in general compatible with differentiation:
\begin{lem}
\label{lem:lem12}Let $B_{\delta}\left(M\right)\sq M_{n}\left(\CC\right)$
be the $\delta$-ball around $M$ in $M_{n}\left(\CC\right)$, 
\[
B_{\delta}\left(M\right)=\left\{ A\in M_{n}\left(\CC\right)\st rk\left(A-M\right)\le\delta\right\} ,
\]
and let $\gamma:\left(-1,1\right)\to B_{\delta}\left(M\right)$ be
a smooth curve. Then $\frac{d\gamma}{dt}:\left(-1,1\right)\to B_{2\delta}\left(0\right)$,
so that $rk\left(\frac{d\gamma}{dt}\mid_{t_{0}}\right)\le2\delta$.
\end{lem}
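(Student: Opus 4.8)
The plan is to show that the rank of a matrix is "lower semicontinuous in a strong, linear way" along the curve, and then take a limit. The key observation is that rank is subadditive: $rk(A+B) \le rk(A) + rk(B)$ for all $A,B \in M_n(\CC)$. Applying this to $\gamma(t_0+\varepsilon) - \gamma(t_0) = \bigl(\gamma(t_0+\varepsilon) - M\bigr) - \bigl(\gamma(t_0) - M\bigr)$ immediately gives
\[
rk\bigl(\gamma(t_0+\varepsilon) - \gamma(t_0)\bigr) \le rk\bigl(\gamma(t_0+\varepsilon) - M\bigr) + rk\bigl(\gamma(t_0) - M\bigr) \le 2\delta
\]
for every $\varepsilon$ with $t_0+\varepsilon \in (-1,1)$. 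So the difference quotient $\tfrac{1}{\varepsilon}\bigl(\gamma(t_0+\varepsilon) - \gamma(t_0)\bigr)$ has rank at most $2\delta$ as well, since scaling by a nonzero scalar does not change the rank.

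Next I would pass to the derivative. Since $\gamma$ is smooth, the difference quotients converge (entrywise, hence in any norm on the finite-dimensional space $M_n(\CC)$) to $\tfrac{d\gamma}{dt}\big|_{t_0}$ as $\varepsilon \to 0$. The set
\[
\{A \in M_n(\CC) \st rk(A) \le 2\delta\} = \{A \in M_n(\CC) \st \operatorname{rank}(A) \le \lfloor 2\delta n \rfloor\}
\]
is the zero locus of the $(\lfloor 2\delta n\rfloor + 1) \times (\lfloor 2\delta n\rfloor + 1)$ minors, hence is Zariski-closed and in particular closed in the Euclidean topology. Since every difference quotient lies in this closed set, so does the limit, giving $rk\bigl(\tfrac{d\gamma}{dt}\big|_{t_0}\bigr) \le 2\delta$, which is the claim. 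Finally, one should note that the same argument applies at every $t_0 \in (-1,1)$, so $\tfrac{d\gamma}{dt}$ maps $(-1,1)$ into $B_{2\delta}(0)$.

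There is really no serious obstacle here; the only point that requires a moment's care is that rank is integer-valued, so the sublevel set of the normalized rank is genuinely closed (it is not merely "lower semicontinuous" in a way that could fail in the limit), and that closedness is exactly what lets us pass from the difference quotients to the derivative. Everything else is subadditivity of rank plus smoothness of $\gamma$.
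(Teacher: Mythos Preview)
Your proof is correct and is essentially the same as the paper's: both bound the rank of the difference quotient by $2\delta$ via subadditivity of rank (writing $\gamma(t_0+\varepsilon)-\gamma(t_0)$ as a difference of two matrices in $B_\delta(M)-M$), and then pass to the limit using that the sublevel sets of rank are closed (equivalently, that rank is lower semicontinuous) because they are cut out by the vanishing of minors. The only cosmetic difference is that the paper phrases the last step via $\liminf$ and lower semicontinuity, whereas you phrase it via closedness of the sublevel set.
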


\begin{proof}
Being contained in $B_{\delta}\left(M\right)$ is equivalent to the
vanishing of all the $\left(k+1\right)$-minors of $\gamma\left(t\right)-M$
for $k=\left\lfloor \delta n\right\rfloor $, and is therefore a closed
condition - thus $rank\left(\bullet\right)$ is a lower semicontinuous
function. We therefore have 
\begin{align*}
rank\left(\lim{h\to0}\,\frac{\gamma\left(t_{0}+h\right)-\gamma\left(t_{0}\right)}{h}\right) & \le\underset{h\to0}{liminf}\,rank\left(\frac{\gamma\left(t_{0}+h\right)-\gamma\left(t_{0}\right)}{h}\right)\\
 & \le\underset{h}{sup}\;rank\left(\gamma\left(t_{0}+h\right)-M\right)+rank\left(M-\gamma\left(t_{0}\right)\right)\le2\delta n
\end{align*}
\end{proof}
\begin{prop}
\label{prop:diff}
\end{prop}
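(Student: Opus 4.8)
I read Proposition~\ref{prop:diff} as making precise the remark preceding it: differentiation at the identity sends smooth almost-representations of a connected Lie group $G$ to almost-representations of its Lie algebra $\mf g$, compatibly with the rank distance. Concretely, if $\ph:G\to GL_n(\CC)$ is a smooth $\delta$-representation then $d\ph:=d\ph_e:\mf g\to\mf{gl}_n(\CC)$ is a $c\,\delta$-representation for a constant $c=c(\mf g)$, and if $\psi:G\to GL_N(\CC)$ is a genuine smooth representation with $d^{flex}(\ph,\psi)\le\varepsilon$ (resp.\ $d^{strict}(\ph,\psi)\le\varepsilon$) then $d^{flex}(d\ph,d\psi)\le 2\varepsilon$ (resp.\ $d^{strict}(d\ph,d\psi)\le 2\varepsilon$); this is the differentiation half of the correspondence that --- together with an integration of almost-representations of $\mf g$ along one-parameter subgroups --- transfers uniform $\CC$-stability from $G$ to $\mf g$, so that non-stability of $\mf g$ propagates back to $G$. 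Everything is extracted from Lemma~\ref{lem:lem12}, which (its proof using only lower semicontinuity of rank) holds on any interval and for curves of matrices of any size, including the ``hatted'' ones of the flexible metric.

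For the distance comparison, fix $X\in\mf g$ and consider the smooth curve $t\mapsto\widehat{\ph(\exp(tX))}-\widehat{\psi(\exp(tX))}$. Since $\exp(tX)\in G$ it lies in the ball $B_\varepsilon(0)$, so by Lemma~\ref{lem:lem12} its derivative at $t=0$, namely $\widehat{d\ph(X)}-\widehat{d\psi(X)}$, satisfies $rk\le 2\varepsilon$. As $X$ is arbitrary and $d\psi$ is a genuine representation of $\mf g$, this gives $d^{flex}(d\ph,d\psi)\le 2\varepsilon$; the strict case ($N=n$) is identical.

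For the defect, recover the bracket from the group commutator. For $X,Y\in\mf g$ set $g(t)=\exp(tX)\exp(tY)\exp(-tX)\exp(-tY)$, a smooth curve with $g(0)=e$, $g'(0)=0$ and $g''(0)=2[X,Y]$. Put $\Phi(t)=\ph(\exp(tX))\ph(\exp(tY))\ph(\exp(-tX))\ph(\exp(-tY))$; applying the defect inequality three times (telescoping) gives $rk(\ph(g(t))-\Phi(t))\le 3\delta$ for all $t$, so by two applications of Lemma~\ref{lem:lem12} the second derivative at $0$ of $t\mapsto\ph(g(t))-\Phi(t)$ has $rk\le 12\delta$. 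Since $g'(0)=0$, the chain rule gives $\tfrac{d^2}{dt^2}\ph(g(t))\big|_{t=0}=d\ph(g''(0))=2\,d\ph([X,Y])$, whereas the Leibniz rule expands $\Phi''(0)$ into terms built from $\ph(e)$, the values $\pm d\ph(X),\pm d\ph(Y)$ and the second derivatives $\alpha=\tfrac{d^2}{dt^2}\ph(\exp(tX))\big|_{t=0}$, $\beta=\tfrac{d^2}{dt^2}\ph(\exp(tY))\big|_{t=0}$. As $\ph(e)$ is invertible with $rk(\ph(e)^2-\ph(e))\le\delta$, it equals $I$ up to rank $O(\delta)$, so each factor $\ph(e)$ may be replaced by $I$ at a cost of $O(\delta)$ in rank, leaving $\Phi''(0)=2\alpha+2\beta+2[d\ph(X),d\ph(Y)]-2\,d\ph(X)^2-2\,d\ph(Y)^2$ up to rank $O(\delta)$. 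Comparing the two second derivatives, $rk\!\left(d\ph([X,Y])-[d\ph(X),d\ph(Y)]-(\alpha-d\ph(X)^2)-(\beta-d\ph(Y)^2)\right)=O(\delta)$. The two leftover error terms are killed by the same device on one parameter: $rk(\ph(\exp(2tX))-\ph(\exp(tX))^2)\le\delta$, so differentiating twice via Lemma~\ref{lem:lem12} and replacing $\ph(e)$ by $I$ once more gives $rk(\alpha-d\ph(X)^2)=O(\delta)$, and symmetrically for $\beta$. Hence $rk(d\ph([X,Y])-[d\ph(X),d\ph(Y)])=O(\delta)$ for all $X,Y$; expanding a general pair in a basis $z_1,\dots,z_m$ of $\mf g$ and using subadditivity and scaling-invariance of $rk$ multiplies the bound by at most $m^2$, so $d\ph$ is a $c\,\delta$-representation with $c=O((\dim\mf g)^2)$.

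The routine-looking but genuinely delicate point is the Leibniz expansion of $\Phi''(0)$ and checking that, once the $O(\delta)$ corrections from $\ph(e)\ne I$ are discarded, its second-order terms assemble exactly into $2[d\ph(X),d\ph(Y)]$ --- the $\delta$-perturbed avatar of $d\rho([X,Y])=[d\rho(X),d\rho(Y)]$. The deeper obstacle lies elsewhere: the companion integration step needed to transfer stability from $G$ to $\mf g$. From a $\delta$-representation $\phi$ of $\mf g$ one wants a smooth $O(\delta)$-representation $\Phi$ of $G$ with $d\Phi=\phi$, defined on a product of exponentials by $\Phi(\prod\exp(X_i))=\prod\exp(\phi(X_i))$; proving this is well defined and has small defect from the bracket-almost-relations alone requires controlling the accumulation of rank errors through the infinite Baker--Campbell--Hausdorff series and through long words, and this is where the structure of $G$ (connected, and --- for the semisimple groups of interest --- boundedly generated by unipotent one-parameter subgroups) must be used.
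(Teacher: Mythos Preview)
Your distance comparison is the same as the paper's. For the defect, however, the paper takes a shorter two-variable route: from $rk\!\left(\Phi(ghg^{-1})-\Phi(g)\Phi(h)\Phi(g)^{-1}\right)\le 2\varepsilon$ it differentiates once in $h$ at $e$ (via Lemma~\ref{lem:lem12}) to get the $Ad$-almost-equivariance $rk\!\left(\ph(Ad_g x)-Ad_{\Phi(g)}\ph(x)\right)\le 4\varepsilon$, and then once more in $g$ at $e$ to get $rk\!\left(\ph([y,x])-[\ph(y),\ph(x)]\right)\le 8\varepsilon$. This avoids your second-derivative Leibniz expansion, the $\ph(e)\neq I$ corrections, and the auxiliary $\ph(\exp(2tX))-\ph(\exp tX)^2$ cancellation, and it yields a clean universal constant~$8$; moreover the intermediate $Ad$-inequality is itself reused later in Section~\ref{subsec:Remarks}. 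Your commutator-curve argument is correct, but note that your final basis expansion is unnecessary: the bound you obtain already holds for arbitrary $X,Y$, so the constant is absolute, not $O((\dim\mf g)^2)$. Your closing paragraph on integrating almost-representations back up to $G$ is a reasonable heuristic, but be aware that the paper does \emph{not} claim or prove this converse; it is left open, and the transfer to Lie groups and lattices in Corollary~\ref{cor:Liegroups} and Theorem~\ref{thm:Lattices} goes instead via compressions of genuine representations.
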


\begin{itemize}
\item Let $G$ be a Lie group and $\Phi:G\to GL_{n}\left(\CC\right)$ a
smooth uniform $\varepsilon$-representation. Then $\ph\vcentcolon=d\Phi\mid_{id}:\mf g\to\mf{gl}_{n}\left(\CC\right)$
is an $8\varepsilon$-representation. 
\item If $\Psi:G\to GL_{N}\left(\CC\right)$ is another smooth map, $\psi=d\Psi$
and $d^{strict}\left(\Phi,\Psi\right)\le\delta$ (respectively $d^{flex}\left(\Phi,\Psi\right)\le\delta$),
then $d^{strict}\left(\ph,\psi\right)\le2\delta$ ($d^{flex}\left(\ph,\psi\right)\le2\delta$).
\end{itemize}
\begin{proof}
For any $g,h\in G$ we have by the triangle inequality
\[
rk\left(\Phi\left(ghg^{-1}\right)-\Phi\left(g\right)\Phi\left(h\right)\Phi\left(g\right)^{-1}\right)\le2\varepsilon.
\]
Written differently setting $\Psi_{g}\left(h\right)=ghg^{-1}$,
\[
rk\left(\Phi\left(\Psi_{g}\left(h\right)\right)-\Psi_{\Phi\left(g\right)}\left(\Phi\left(h\right)\right)\right)\le2\varepsilon.
\]
Taking the differential at $h=id$ we get by Lemma \ref{lem:lem12}
that 
\begin{equation}
rk\left(\ph\left(Ad_{g}\left(x\right)\right)-Ad_{\Phi\left(g\right)}\left(\ph\left(x\right)\right)\right)\le4\varepsilon,\label{eq:1}
\end{equation}
for any $x\in\mf g$. Taking again the differential at $g=id$, 
\[
rk\left(\ph\left(ad\left(y\right)\left(x\right)\right)-ad\left(\ph\left(y\right),\ph\left(x\right)\right)\right)\le8\varepsilon.
\]
Thus, 
\[
rk\left(\ph\left(\left[y,x\right]\right)-\left[\ph\left(y\right),\ph\left(x\right)\right]\right)\le8\varepsilon.
\]
The second assertion follows by applying again Lemma \ref{lem:lem12}
and differentiating $\Phi\left(g\right)-\Psi\left(g\right)$.
\end{proof}
A natural way to construct almost representations of a Lie group $G$
is by compressing representations as in Lemma \ref{lem:compression}.
It is straightforward to check that taking differentials commutes
with compressing, so that the differential of a compression of a representation
$\Phi$ of $G$ at $id$ will be a compression of the differential
$d\Phi$ on $\mf g$. 

It therefore follows from Corollary \ref{cor:real} and Proposition
\ref{prop:diff}, together with Proposition \ref{prop:rigidity},
\begin{cor}
\label{cor:Liegroups}Semisimple Lie groups are not strictly $\CC$-stable.
\end{cor}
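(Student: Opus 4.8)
The plan is to combine the machinery already assembled in the paper: the non-flexible-stability of semisimple Lie algebras (Theorem \ref{thm:thm9}, via its real version in Corollary \ref{cor:real}), the differentiation principle of Proposition \ref{prop:diff}, and the observation that differentiation commutes with compression. First I would reduce to the Lie-algebra statement. Let $G$ be a semisimple Lie group with Lie algebra $\mf g$; then $\mf g$ is a real semisimple Lie algebra, so by Corollary \ref{cor:real} it is not flexibly $\CC$-stable, and in fact Theorem \ref{thm:thm9}\ref{enu:10.4} gives us something sharper: for the irreducible representation $\psi_\mu$ of $\mf g_{\CC}$ of highest weight $\mu$, the $\varepsilon_n$-representations $\ph_n^{\lambda}$ (for $\lambda$ not $W$-linked to $\mu$, and more importantly not $W$-linked to \emph{any} dominant weight of dimension $\le \dim\psi_\mu$) are bounded away, in $d^{flex}$, from $\psi_\mu$ by a constant independent of $n$. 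The key extra input is that an irreducible representation is bounded away (in $d^{flex}$, hence certainly in $d^{strict}$) from \emph{every} representation of strictly smaller dimension, since such a representation cannot contain $\psi_\mu$ as a subrepresentation — this is exactly the remark preceding the corollary.

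Next I would lift these $\varepsilon_n$-representations of $\mf g$ to smooth $\varepsilon_n'$-representations of $G$. Here is where one uses that $\ph_n^{\lambda}$ arises as a compression: pick a genuine (smooth, finite-dimensional, possibly highest-weight) representation $\Pi_n^{\lambda}$ of $G$ whose differential at the identity restricts, on a large subspace $D_n$, to the Verma-module action — concretely, one can take a highest-weight $G$-representation or, since $G$ is only assumed connected semisimple and $M(\lambda)$ is infinite-dimensional, one works instead with a genuine \emph{finite-dimensional} representation on a space large enough that its compression to $D_n$ agrees with $\ph_n^{\lambda}$ up to the required rank error. Compressing $\Pi_n^{\lambda}$ to $D_n$ as in Lemma \ref{lem:compression} yields a smooth map $\Phi_n^{\lambda}:G\to GL_{k_n}(\CC)$ which, by Lemma \ref{lem:compression}\ref{lem:compression1}, is a uniform $O(\varepsilon_n)$-representation of $G$, and whose differential at the identity is the compression of $d\Pi_n^{\lambda}$, i.e. agrees with $\ph_n^{\lambda}$ up to low-rank error.

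Now suppose, for contradiction, that $G$ were strictly $\CC$-stable. Then for the $\delta=\delta(\varepsilon)$ furnished by stability, taking $n$ large enough that $\Phi_n^{\lambda}$ has defect $\le\delta$, we would obtain a smooth genuine representation $\Psi_n:G\to GL_{k_n}(\CC)$ with $d^{strict}(\Phi_n^{\lambda},\Psi_n)\le\varepsilon$. Differentiating via Proposition \ref{prop:diff}, $\psi_n:=d\Psi_n$ is a genuine representation of $\mf g$ on $\CC^{k_n}$ (a strict representation, same dimension $k_n$) with $d^{strict}(\ph_n^{\lambda},\psi_n)\le 2\varepsilon + o(1)$. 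Choosing $\lambda\in\mf h^*$ not $W$-linked to any dominant integral weight whose irreducible has dimension $\le k_n$ — any $\lambda$ not $W$-linked to $\Lambda^+$ works, but one must check a single $\lambda$ suffices uniformly; alternatively use a fixed $\lambda$ with $\chi_\lambda$ not among the finitely many central characters of dimension $\le k_n$ representations, noting $k_n\to\infty$ forces us to let $\lambda$ depend on $n$, which is fine since we only need one $\lambda$ per $n$ — Theorem \ref{thm:thm9}\ref{enu:10.4} gives $d^{flex}(\ph_n^{\lambda},\psi_n)\ge c>0$ with $c$ independent of $n$, a contradiction once $\varepsilon<c/3$.

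The main obstacle, and the step to be careful about, is the lifting in the second paragraph: producing, for each $n$, an honest smooth representation of the \emph{group} $G$ whose compression reproduces $\ph_n^{\lambda}$ up to small rank error. Since $G$ is connected semisimple, its finite-dimensional representations are exactly the integrations of those of $\mf g_{\CC}$ that descend to $G$ (a finite-cover issue), so one should either pass to a finite cover $\tilde G$ — harmless, as stability of $G$ would be inherited appropriately, or one argues directly at the covering level — or, more simply, choose $\lambda$ dominant-integral-adjacent so that the relevant $D_n$ embeds into a genuine finite-dimensional $G$-module. One clean route: take $\Pi_n^{\lambda}$ to be a large finite-dimensional representation of $\tilde G$ containing $D_n$ as (an approximation to) a $U(\mf n^-)$-submodule, compress, differentiate, and invoke that a semisimple Lie group and its finite covers have the same Lie algebra so the differential computation is unchanged. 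Everything else is bookkeeping with the constants $\varepsilon_n=2m^2/n\to 0$ and the rank-estimate lemmas already proved.
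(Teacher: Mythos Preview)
Your lifting step is a genuine gap. You try to realize the Verma-module compressions $\ph_n^\lambda$ (for $\lambda$ not $W$-linked to any dominant integral weight) as differentials of smooth almost representations of $G$, but this is precisely the open question discussed in Section~\ref{subsec:Remarks}: it is not known whether such $\ph_n^\lambda$ lift, and the paper explains why this is delicate (the Weyl-group almost-invariance constraint coming from equation~\eqref{eq:1}). Your suggested workarounds --- embedding $D_n$ into a large finite-dimensional $G$-module, or passing to a cover --- do not resolve this: for $\lambda$ not dominant integral, $M(\lambda)$ has no finite-dimensional quotient and does not embed in any finite-dimensional $\mf g$-module, so there is no candidate $\Pi_n^\lambda$ whose compression has differential $\ph_n^\lambda$.

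The paper's argument bypasses this entirely, and you actually state the key observation in your first paragraph without then using it. Rather than lifting an exotic $\ph_n^\lambda$ up to $G$, the paper starts with a genuine irreducible representation $\Phi:G\to GL_n(\CC)$ and compresses it to dimension $n-1$. This compression is automatically smooth, and (since differentiation commutes with compression) its differential is the compression $[d\Phi]_U$ of the irreducible $\mf g$-representation $d\Phi$. Now the central-character argument of Theorem~\ref{thm:thm9}\ref{enu:10.4} applies directly: the center acts on $d\Phi$ exactly by $\chi_\mu$, hence almost by $\chi_\mu$ on $[d\Phi]_U$, and since no $(n-1)$-dimensional $\mf g$-representation can contain $L(\mu)$ as a summand, $[d\Phi]_U$ is bounded from all representations of its dimension. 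Strict stability of $G$ would then, via Proposition~\ref{prop:diff}, force $[d\Phi]_U$ close to such a representation --- a contradiction. The moral: compress a $G$-representation down rather than try to lift a $\mf g$-almost-representation up.
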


\begin{proof}
Assume $G$ is a complex Lie group, the real case follows from Corollary
\ref{cor:real}. A codimension $1$ compression of a highest weight
representation $\Phi_{n}:G\to GL_{k_{n}}\left(\CC\right)$ is a $\frac{2}{k_{n}-1}$-representation
by Lemma \ref{lem:compression}. Take a sequence $\Phi_{n}$ of such
representations with $k_{n}\to\infty$, and let $\check{\Phi}_{n}$
be the corresponding compression. If $G$ were strictly stable, there
would be a sequence $\Psi_{n}:G\to GL_{k_{n}}\left(\CC\right)$ of
representations with $d^{strict}\left(\check{\Phi}_{n},\Psi_{n}\right)\to0$.
Taking differentials, it follows from Proposition \ref{prop:diff}
that $d^{strict}\left(d\check{\Phi}_{n},d\Psi_{n}\right)\to0$. But
this is a contradiction to Proposition \ref{prop:rigidity}, whereby
$d\check{\Phi}_{n}$ is bounded in rank (in the strict sense) from
any representation of $\mf g$.
\end{proof}
We would like to apply this result to lattices in higher rank. Since
$rk\left(\bullet\right)$ is a polynomial function on the entries
of a matrix, two maps that are close on a Zariski dense subgroup are
close on the entire group:
\begin{lem}
\label{prop:zariski}Let $G\le SL_{r}\left(\RR\right)$ be a linear
Lie group and $H\le G$ a Zariski dense subgroup. Suppose $\Phi,\Psi:G\to GL_{n}\left(\CC\right)$
are rational maps satisfying 
\[
\underset{h\in H}{sup}\,rk\left(\Phi\left(h\right)-\Psi\left(h\right)\right)\le\varepsilon.
\]
Then 
\[
\underset{g\in G}{sup}\,rk\left(\Phi\left(g\right)-\Psi\left(g\right)\right)\le\varepsilon.
\]
\end{lem}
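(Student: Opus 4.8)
The plan is to reduce the statement about $\mathrm{rk}$ on a Zariski dense subgroup to the classical fact that a polynomial vanishing on a Zariski dense set vanishes identically. First I would fix $\varepsilon$ and set $k = \lfloor \varepsilon n \rfloor$. Recall that for a matrix $C \in M_n(\CC)$, the condition $\mathrm{rank}(C) \le k$ is equivalent to the vanishing of all $(k+1)\times(k+1)$ minors of $C$; each such minor is a polynomial in the entries of $C$. Applying this with $C = \Phi(g) - \Psi(g)$, whose entries are rational functions of $g \in G \le SL_r(\RR)$, I would obtain a finite collection of rational functions $Q_j$ on $G$ (the $(k+1)$-minors of $\Phi(g)-\Psi(g)$) such that $\mathrm{rk}(\Phi(g)-\Psi(g)) \le \varepsilon$ holds exactly when all $Q_j(g) = 0$.

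Next I would clear denominators: on the affine variety $G \le SL_r(\RR)$, write each $Q_j = P_j / D_j$ where $P_j, D_j$ are polynomials in the matrix entries $x_{1,1},\ldots,x_{r,r}$ restricted to $G$, with $D_j$ nowhere vanishing on $G$ (since $\Phi,\Psi$ are rational maps regular on $G$, taking values in $GL_n$, the denominators appearing can be taken invertible on all of $G$; alternatively one can use a single common denominator that is a power of $\det$). Then $Q_j(g) = 0 \iff P_j(g) = 0$ for $g \in G$. By hypothesis $\mathrm{rk}(\Phi(h)-\Psi(h)) \le \varepsilon$ for all $h \in H$, so every $P_j$ vanishes on $H$. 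Since $H$ is Zariski dense in $G$, and each $P_j$ is a polynomial function on $M_r(\RR)$, the Zariski closure of $H$ — which is all of $G$ — is contained in the zero set of $P_j$. Hence $P_j(g) = 0$, and therefore $Q_j(g) = 0$, for every $g \in G$ and every $j$; that is, $\mathrm{rank}(\Phi(g)-\Psi(g)) \le k \le \varepsilon n$ for all $g \in G$, which is the claim.

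The only genuinely delicate point is bookkeeping about denominators: one must be sure that the locus $\{\mathrm{rk}(\Phi(g)-\Psi(g))\le \varepsilon\}$ really is \emph{Zariski closed} in $G$, i.e.\ cut out by honest polynomials rather than just rational functions with possible poles. This is handled by the standard observation that a rational representation of a linear algebraic group into $GL_n$ has matrix entries that are regular functions on the group (the coordinate ring localizes only at $\det$, which is invertible everywhere), so each minor $Q_j$ extends to a regular — hence, on the affine variety $G$, polynomial-after-clearing-the-common-denominator $\det^N$ — function whose zero set is closed. Everything else is immediate from the definition of Zariski density: a closed set containing $H$ contains $\overline{H}^{\,\mathrm{Zar}} = G$.
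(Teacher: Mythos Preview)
Your argument is correct and follows exactly the paper's approach: the paper's proof simply observes that the condition $rk(\Phi(g)-\Psi(g))\le\varepsilon$ is equivalent to the vanishing of all $(k+1)$-minors for $k=\lfloor\varepsilon n\rfloor$, that this is a polynomial (Zariski-closed) condition since $\Phi,\Psi$ are rational, and invokes Zariski density of $H$. Your write-up is more explicit than the paper's about clearing denominators to pass from rational functions to regular/polynomial functions on $G$, but this is just unpacking the same idea rather than a different route.
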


\begin{proof}
The condition $rk\left(\Phi\left(g\right)-\Psi\left(g\right)\right)\le\varepsilon$
is polynomial in the matrix entries, and is equivalent to the vanishing
of all $\left(k+1\right)$-minors for $k=\left\lfloor \varepsilon n\right\rfloor $.
Since $\Phi,\Psi$ are rational, it follows from Zariski density of
$H$ that $\underset{g\in G}{sup}\,rk\left(\Phi\left(g\right)-\Psi\left(g\right)\right)\le\varepsilon$.
\end{proof}
We may now use Borel Density and apply Lemma \ref{prop:zariski} to
lattices in semisimple Lie groups.
\begin{cor}
\label{cor:LatticeZariski}Let $G\le SL_{r}\left(\RR\right)$ be a
connected linear semisimple Lie group and $\Gamma\le G$ a lattice
that projects densely into the maximal compact factor of $G$. Suppose
$\Phi,\Psi:G\to GL_{n}\left(\CC\right)$ are rational maps satisfying
\[
\underset{\gamma\in\Gamma}{sup}\,rk\left(\Phi\left(\gamma\right)-\Psi\left(\gamma\right)\right)\le\varepsilon.
\]
Then 
\[
\underset{g\in G}{sup}\,rk\left(\Phi\left(g\right)-\Psi\left(g\right)\right)\le\varepsilon.
\]
\end{cor}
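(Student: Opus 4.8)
The plan is to deduce this corollary formally from the two ingredients already at hand: the Borel Density Theorem quoted above, and Lemma \ref{prop:zariski}. The observation is simply that a lattice satisfying the stated density hypothesis is Zariski dense in $G$, and Lemma \ref{prop:zariski} already upgrades agreement in the rank metric on a Zariski dense subgroup to agreement on the whole group.

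Concretely, I would first invoke the Borel Density Theorem: since $G \le SL_r(\RR)$ is a connected linear semisimple Lie group and $\Gamma \le G$ is a lattice projecting densely onto the maximal compact factor of $G$, the theorem yields that $\Gamma$ is Zariski dense in $G$. Then I would apply Lemma \ref{prop:zariski} with $H = \Gamma$: the maps $\Phi, \Psi : G \to GL_n(\CC)$ are rational by hypothesis, and $\sup_{\gamma \in \Gamma} rk(\Phi(\gamma) - \Psi(\gamma)) \le \varepsilon$ is exactly the input of that lemma for the Zariski dense subgroup $H = \Gamma$. Its conclusion is then precisely $\sup_{g \in G} rk(\Phi(g) - \Psi(g)) \le \varepsilon$, as desired.

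There is no genuine obstacle here; the only thing worth checking is that the density hypothesis appearing in the statement of the corollary is literally the one required in our formulation of the Borel Density Theorem (``projects densely onto the maximal compact factor of $G$''), which it is. Thus this corollary is a short bookkeeping step, whose purpose is to package the rigidity input into a form directly applicable to irreducible lattices in higher rank semisimple Lie groups in the proof of Theorem \ref{thm:Lattices}.
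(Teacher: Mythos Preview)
Your proposal is correct and matches the paper's approach exactly: the corollary is stated without a separate proof, immediately after the sentence ``We may now use Borel Density and apply Lemma~\ref{prop:zariski} to lattices in semisimple Lie groups,'' which is precisely the two-step argument you wrote out.
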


\medskip{}
Margulis Superrigidity then implies that lattices in higher rank are
not strictly stable.
\begin{proof}[proof of Theorem \ref{thm:Lattices}]
 By adding compact factors to $G$, we may assume WLOG that $\Gamma$
is commensurable to $G_{\ZZ}$, and that it projects densely onto
the maximal compact factor of $G$. 

A codimension $1$ compression of a highest weight representation
$\Phi:G\to GL_{n}\left(\CC\right)$ is a $\frac{2}{n-1}$-representation
by Lemma \ref{lem:compression}. Let $\ph:\Gamma\to GL_{n-1}\left(\CC\right)$
be the restriction of $\Phi$ to the lattice $\Gamma$. Suppose $\psi:\Gamma\to GL_{n-1}\left(\CC\right)$
is a representation such that $d^{strict}\left(\ph,\psi\right)=\delta$.
Then by Margulis Superrigidity, restricting to a finite index subgroup
$\bar{\Gamma}\le\Gamma$, $\psi$ is the restriction of a representation
$\Psi$ of $G$. By Corollary \ref{cor:LatticeZariski}, since $\Phi,\Psi$
are rational and since $\bar{\Gamma}$ is also a lattice that projects
densely into the maximal compact factor, we have 
\[
d^{strict}\left(\Phi,\Psi\right)\le\delta,
\]
contradicting Corollary \ref{cor:Liegroups}.
\end{proof}

\subsection{Remarks on lifting almost representations of Lie algebras\label{subsec:Remarks}}

It is natural to ask whether the converse of Proposition \ref{prop:diff}
holds, i.e. whether almost representations of real Lie algebras always
arise as differentials of almost representations of the corresponding
simply connected Lie group. In particular, whether the $\varepsilon_{n}$-representations
$\ph_{n}^{\lambda}$ in Theorem \ref{thm:thm9}, restricted to some
real Lie algebra $\mf g$, arise as differentials of almost representations
of the corresponding Lie group $G$. Indeed, a positive answer for
any $\ph_{n}^{\lambda}$ for $\lambda$ such that $\chi_{\lambda}$
is not the central character of any finite dimensional representation,
would imply that Lie groups and lattices in higher rank Lie groups
are not \emph{flexibly} $\CC$-stable, using the same reasoning as
in the proof of Theorem \ref{thm:Lattices}. We do not know the answer
to this question. 

However, we do note that there seem to be some difficulties in lifting
such almost representations of $\mf g$ to those of $G$; Recall that
the Weyl group of $G$ is $W=N_{G}\left(\mf h\right)/C_{G}\left(\mf h\right)$,
so by equation \ref{eq:1} in the proof of Proposition \ref{prop:diff},
we have 
\[
rk\left(\ph\left(wxw^{-1}\right)-\Phi\left(w\right)\ph\left(x\right)\Phi\left(w\right)^{-1}\right)\le4\varepsilon,
\]
for any $x\in\mf g$ and $w\in W$. We can interpret this by saying
that $\ph$ has to be almost invariant under the Weyl group action. 

Consider the simplest case of $\mf g=\mf{sl}_{2}\left(\CC\right)$
(whose representations can be seen as the building blocks of representations
of all semisimple Lie algebras). Let 
\[
\mathbf{e}=\begin{pmatrix}0 & 1\\
0 & 0
\end{pmatrix},\quad\mathbf{f}=\begin{pmatrix}0 & 0\\
1 & 0
\end{pmatrix},\quad\mathbf{h}=\begin{pmatrix}1 & 0\\
0 & -1
\end{pmatrix}
\]
be the standard basis of $\mf g$, then the nontrivial element $w\in W$
acts on $\mf g$ by 
\[
\mathbf{h}\mapsto-\mathbf{h},\;\mathbf{e}\mapsto\mathbf{-\mathbf{f}},\;\mathbf{f}\mapsto-\mathbf{e},
\]
which when applied to $L\left(\lambda\right)$ results in the \emph{lowest
weight module }$\bar{L}\left(-\lambda\right)$ with lowest weight
$-\lambda$ (see \cite[3]{MR2567743}). When $\lambda$ is dominant
integral (in this case $\lambda\in\ZZ_{\ge0}$), these modules are
indeed isomorphic. But if $\lambda\notin\ZZ_{\ge0}$, this is not
true, but they do have the same central character. If $\ph_{n}^{\lambda}$
can be lifted to some $\Phi_{n}^{\lambda}$, this means that finite
dimensional compressions of $L\left(\lambda\right)$ are arbitrarily
close in rank to compressions of (a conjugation of) $\bar{L}\left(-\lambda\right)$.
The technique of central characters only helps in separating weight
modules with distinct central characters, but in $\mf{sl}_{2}\left(\CC\right)$
for every central character $\chi_{\lambda}$ there is a family of
weight modules 
\[
\left\{ V\left(\xi,\chi_{\lambda}\right)\st\xi\in\CC/2\ZZ\right\} 
\]
having central character $\chi_{\lambda}$ and spectrum $\xi$ (see
\cite[3.3]{MR2567743}). We do not know whether modules with the same
central character but different spectrum can be close to one another,
in the sense that their respective finite dimensional compressions
are arbitrarily close in the rank metric.

\section{Free groups }

In \cite{Rolli}, Rolli constructed uniform almost representations
of nonabelian free groups in the operator norm that are bounded from
true representations (see also \cite[Section 3]{BurgerOzawaThom}).
This inspired a construction by Becker and Chapman \cite[Section 5]{BeckerChapman}
of uniform almost homomorphisms to permutation groups $S_{n}$, that
are bounded from true representations.

We construct similar almost representations in the rank metric inspired
by Rolli's construction. It is interesting to note that it is in some
sense opposite to Rolli's construction; There, the maps $\ph$ behave
locally like representations that are contained in some $\delta$-ball
around the identity, but since $GL_{n}\left(\CC\right)$ has no small
subgroups, it is bounded from true representations where high powers
of the generators must be bounded away from the identity. In the case
of the rank metric, a finite set of elements close to the identity
will generate an algebra that is contained in a small ball around
the identity (so in a sense there are ``many'' small subgroups),
but the almost representations that we construct have elements as
far as possible from the identity.
\begin{prop}
\label{prop:free}Let $\FF_{2}=\left\langle a,b\right\rangle $ and
$n\in\NN$. Then there is a $\frac{3}{n}$-representation $\ph:\FF_{2}\to GL_{n}\left(F\right)$
such that 
\[
rk\left(\ph\left(a\right)-I_{n}\right),rk\left(\ph\left(b\right)-I_{n}\right)\le\frac{1}{n},
\]
 and such that there is an element $w\in\FF_{2}$ satisfying
\[
rk\left(\ph\left(w\right)-I_{n}\right)=1.
\]
 Moreover, for any representation $\psi:\FF_{2}\to GL_{N}\left(F\right)$,
$d^{flex}\left(\ph,\psi\right)\ge\frac{1}{6}-\frac{1}{6n}$.
\end{prop}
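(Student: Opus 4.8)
The plan is to mimic Rolli's construction, but "from the other side'': instead of maps whose values lie in a small ball, we produce a Rolli-type map whose value on one element is as far from the identity as possible, and the obstruction to being close to a representation will come from the fact that two invertible matrices near $I_N$ generate a group acting nontrivially only on a small subspace.

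\medskip{}
\noindent\textbf{The construction.} I will define $\alpha\colon\ZZ\to GL_{n}(F)$ by: $\alpha(0)=I_{n}$; for $j=0,1,\ldots,n-1$ set $\alpha(2^{j})=I_{n}+E_{j+1,\,j+2}$ (matrix units, with the cyclic convention that column $n+1$ means $1$, so $\alpha(2^{n-1})=I_{n}+E_{n,1}$); set $\alpha(k)=I_{n}$ for every other $k>0$; and set $\alpha(-k)=\alpha(k)^{-1}$ for $k>0$. Thus $rk(\alpha(k)-I_{n})\le\frac1n$ for all $k$, and $\alpha(k)\alpha(-k)=I_{n}$ for all $k$. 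Now define $\ph\colon\FF_{2}\to GL_{n}(F)$ by: if the reduced form of $\gamma$ has successive $a$-exponents $k_{1},\ldots,k_{r}$ (from left to right), put $\ph(\gamma)=\alpha(k_{1})\cdots\alpha(k_{r})$ (this is Rolli's construction with the second function taken trivial). Then $\ph(a)=\alpha(1)=I_{n}+E_{1,2}$ and $\ph(b)=I_{n}$ (empty product), so $rk(\ph(a)-I_{n}),rk(\ph(b)-I_{n})\le\frac1n$. For the witnessing word take $w=a\,b\,a^{2}\,b\,a^{4}\,b\cdots b\,a^{2^{n-1}}$, whose $a$-exponent sequence is $(1,2,4,\ldots,2^{n-1})$; then $\ph(w)=\prod_{j=0}^{n-1}(I_{n}+E_{j+1,\,j+2})=(I_{n}+E_{1,2})(I_{n}+E_{2,3})\cdots(I_{n}+E_{n-1,n})(I_{n}+E_{n,1})$, and a short column reduction shows $\det(\ph(w)-I_{n})=\pm1$, so $rk(\ph(w)-I_{n})=1$.

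\medskip{}
\noindent\textbf{Bounding the defect.} This is the routine Rolli computation in the (bi-invariant!) rank metric. Given $g,h\in\FF_{2}$, write $g=g_{0}c$, $h=c^{-1}h_{0}$ with $c$ the maximal cancelling part. Since the "$b$-part'' of $\ph$ is trivial and $\alpha(k)\alpha(-k)=I_{n}$, the middle telescopes to give $\ph(c)\ph(c^{-1})=I_{n}$, hence $\ph(g)\ph(h)=\ph(g_{0})\ph(h_{0})$; this already equals $\ph(gh)$ unless the last $a$-syllable $a^{i}$ of $g_{0}$ meets the first $a$-syllable $a^{i'}$ of $h_{0}$, in which case $\ph(g)\ph(h)\ph(gh)^{-1}$ is conjugate to $\alpha(i)\alpha(i')\alpha(i+i')^{-1}$ (with $i+i'\ne0$ by maximality of $c$). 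Hence $def(\ph)\le\sup_{i,i'}rk\!\big(\alpha(i)\alpha(i')-\alpha(i+i')\big)$. Enumerating the possibilities — at most two of $i,i',i+i'$ are "special'' unless $i=i'=2^{j}$ (forcing $i+i'=2^{j+1}$, where the difference has rank $\le2$), and the cross term $E_{j+1,j+2}E_{i+1,i+2}$ is nonzero only when $i'=2^{j+1}$ — one checks $rk(\alpha(i)\alpha(i')-\alpha(i+i'))\le\frac3n$ in every case, so $\ph$ is a $\frac3n$-representation.

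\medskip{}
\noindent\textbf{The lower bound.} Let $\psi\colon\FF_{2}\to GL_{N}(F)$ be a representation and $\delta=d^{flex}(\ph,\psi)$. If $N>n$ then, since $\widehat{\ph(a)}$ has rank $n$ and $\widehat{\psi(a)}$ has rank $N$, $\delta\ge rk(\ph(a)-\psi(a))\ge\frac{N-n}{n}$. In general, from $rk(\ph(a)-\psi(a))\le\delta$, $rk(\ph(a)-I_{n})\le\frac1n$ and $rk(I_{n}-I_{N})\le\frac{|N-n|}{\min(n,N)}$ one gets $\mathrm{rank}(\psi(a)-I_{N})\le\delta n+1+|N-n|$, and likewise $\mathrm{rank}(\psi(b)-I_{N})\le\delta n+|N-n|$ (sharper, as $\ph(b)=I_{n}$). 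Set $K=\ker(\psi(a)-I_{N})\cap\ker(\psi(b)-I_{N})$, so $\dim K\ge N-2\delta n-1-2|N-n|$. Since $\psi(a)$ and $\psi(b)$ — hence $\psi(\gamma)$ for every $\gamma\in\FF_{2}$ — fix $K$ pointwise, $\mathrm{rank}(\psi(w)-I_{N})\le N-\dim K\le2\delta n+1+2|N-n|$. Now $\mathrm{rank}(\ph(w)-I_{n})=n$, and writing $\widehat{\ph(w)}-\widehat{I_{n}}=(\widehat{\ph(w)}-\widehat{\psi(w)})+(\widehat{\psi(w)}-\widehat{I_{N}})+(\widehat{I_{N}}-\widehat{I_{n}})$ yields $n\le\delta n+(2\delta n+1+2|N-n|)+|N-n|$, i.e.\ $\delta\ge\frac13-\frac1{3n}-\frac{|N-n|}{n}$. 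Combining this with $\delta\ge\frac{N-n}{n}$ when $N>n$ (and noting the bound is already $\ge\frac13-\frac1{3n}$ when $N\le n$) and optimizing over $N$ gives $\delta\ge\frac16-\frac1{6n}$.

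\medskip{}
\noindent\textbf{Expected main obstacle.} The only real tension is in the construction: $\alpha$ must be "spread out'' enough that some product of its values is full rank off $I_{n}$, yet every value must stay within rank $1$ of $I_{n}$ and every two-variable correction $\alpha(i)\alpha(i')-\alpha(i+i')$ must have rank $\le3$. This is possible precisely because $\FF_{2}$ is free — one can feed $\alpha$ arbitrarily varied syllables, so the product $\prod_{j}\alpha(2^{j})$ never "collapses'' the way powers in $\ZZ$ would. (In particular $\ph$ does not factor through an abelian, or otherwise "tame,'' quotient; this is consistent with the open status of $\CC$-stability of $\ZZ$ and $\ZZ^{k}$.) The Rolli defect bookkeeping and the finite case analysis of $rk(\alpha(i)\alpha(i')-\alpha(i+i'))$ are the only places where care, but no new idea, is needed.
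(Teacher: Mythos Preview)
Your approach is essentially the paper's: a Rolli-type map built from a symmetric function $\alpha:\ZZ\to B_{1/n}(I_n)$, reading off one generator's exponents, together with the kernel-intersection argument $\ker(\psi(a)-I_N)\cap\ker(\psi(b)-I_N)$ for the lower bound. The paper leaves the function generic (any symmetric $\tau:\ZZ\to B_{1/n}$ whose values multiply to something of full rank off $I_n$), whereas you commit to elementary matrices $I+E_{j+1,j+2}$ at positions $2^j$; this is a fine concrete instance.

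Two places deserve tightening. First, in the defect computation your claim ``$\ph(g)\ph(h)=\ph(g_0)\ph(h_0)$'' is not literally true when the maximal cancelling part $c$ begins or ends in the middle of an $a$-syllable: e.g.\ $g=a^3$, $h=a^{-2}b$ forces $c=a^2$, $g_0=a$, $h_0=b$, and then $\ph(g)\ph(h)=\alpha(3)\alpha(-2)$ while $\ph(g_0)\ph(h_0)=\alpha(1)$. The paper avoids this by a direct syllable-by-syllable case analysis of the cancellation. More to the point, your subsequent case enumeration of $rk(\alpha(i)\alpha(i')-\alpha(i+i'))$ is unnecessary: since every value of $\alpha$ lies in $B_{1/n}(I_n)$, the identity
\[
\alpha(i)\alpha(i')-\alpha(i+i')=(\alpha(i)-I)\alpha(i')+(\alpha(i')-I)-(\alpha(i+i')-I)
\]
already gives rank $\le 3$, which is exactly how the paper obtains the $\frac{3}{n}$ bound.

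Second, in the lower bound, the parenthetical ``the bound is already $\ge\frac{1}{3}-\frac{1}{3n}$ when $N\le n$'' is wrong for $N<n$ (the term $-\frac{|N-n|}{n}$ is still present); you must combine with $\delta\ge\frac{|N-n|}{\min(n,N)}$ in that case too, and the optimization still yields $\frac{1}{6}-\frac{1}{6n}$. The paper sidesteps this by absorbing $|N-n|\le\varepsilon_0\min(n,N)$ into $\varepsilon_0$ from the start, which gives the cleaner inequalities $\mathrm{rank}(\psi(a)-I_N)\le 2\varepsilon_0 n$, $\mathrm{rank}(\psi(b)-I_N)\le(2\varepsilon_0+\frac{1}{n})n$ and hence $1-2\varepsilon_0\le 4\varepsilon_0+\frac{1}{n}$ directly.
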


\begin{proof}
Let $\delta=\frac{1}{n}$ and let
\[
B_{\delta}=\left\{ A\in GL_{n}\left(F\right)\st rk\left(A-I\right)\le\delta\right\} 
\]
 be the $\delta$-ball around $I$. Let $\tau:\ZZ\to B_{\delta}$
be a symmetric map so that $\tau\left(k\right)^{-1}=\tau\left(k^{-1}\right)$,
$\tau\left(0\right)=I$, and define $\ph:\FF_{2}\to GL_{n}\left(F\right)$
by acting on a reduced word of the form
\[
w=a^{n_{1}}b^{m_{1}}\cdots a^{n_{k}}b^{m_{k}}
\]
by 
\[
\ph\left(w\right)=\tau\left(m_{1}\right)\cdots\tau\left(m_{k}\right).
\]
Allowing $n_{1}=0$ and $m_{k}=0$ defines $\ph$ on all reduced words.
Now for words $w_{1}=a^{n_{1}}b^{m_{1}}\cdots a^{n_{k}}b^{m_{k}}$,
$w_{2}=a^{\ell_{1}}b^{q_{1}}\cdots a^{\ell_{d}}b^{q_{d}}$, if $m_{k}\neq0$
and $\ell_{1}\neq0$, there is no cancellation and $\ph\left(w_{1}w_{2}\right)=\ph\left(w_{1}\right)\ph\left(w_{2}\right)$,
and the same occurs if $m_{k}=\ell_{1}=0$. If exactly one of $m_{k},\ell_{1}$
equals $0$, assume WLOG that $m_{k}=0$ and $\ell_{1}\neq0$, and
the other case is proven similarly. We have
\[
w_{1}w_{2}=a^{n_{1}}b^{m_{1}}\cdots a^{n_{k-r}}b^{m_{k-r}}ca^{\ell_{r}}b^{q_{r}}\cdots a^{\ell_{d}}b^{q_{d}}
\]
for $c$ of the form $c=a^{\alpha}b^{\beta}$, which occurs if $n_{k-i+1}=-\ell_{i}$
and $m_{k-i}=-q_{i}$ for $1\le i\le r-2$. Symmetry of $\tau$ ensures
that the corresponding powers cancel so that 
\begin{align*}
\ph\left(w_{1}\right)\ph\left(w_{2}\right) & =\tau\left(m_{1}\right)\cdots\tau\left(m_{k}\right)\tau\left(q_{1}\right)\cdots\tau\left(q_{d}\right)\\
 & =\tau\left(m_{1}\right)\cdots\tau\left(m_{k-r}\right)\tau\left(m_{k-\left(r-1\right)}\right)\tau\left(q_{r-1}\right)\tau\left(q_{r}\right)\cdots\tau\left(q_{d}\right),
\end{align*}
and 
\[
\ph\left(w_{1}w_{2}\right)=\tau\left(m_{1}\right)\cdots\tau\left(m_{k-r}\right)\tau\left(\beta\right)\tau\left(q_{r}\right)\cdots\tau\left(q_{d}\right)
\]
\[
\implies rk\left(\ph\left(w_{1}w_{2}\right)-\ph\left(w_{1}\right)\ph\left(w_{2}\right)\right)=rk\left(\tau\left(m_{k-\left(r-1\right)}\right)\tau\left(q_{r-1}\right)-\tau\left(\beta\right)\right)\le3\delta.
\]
 It is thus a uniform $3\delta$-representation. Assume further that
$\left\{ \tau\left(k\right)\right\} _{k=1}^{n}$ generate a matrix
$M$ with $rank\left(M-I_{n}\right)=n$, thus 
\[
\ph\left(abab^{2}\cdots ab^{t}\right)=\tau\left(1\right)\cdots\tau\left(t\right)=M,
\]
for some $t\in\NN$.

Let $\psi:\Gamma\to GL_{N}\left(F\right)$ be a representation and
$\varepsilon_{0}=d^{flex}\left(\ph,\psi\right)$. Since $rk\left(\ph\left(a\right)-I_{n}\right)=0$
and $rk\left(\ph\left(b\right)-I_{n}\right)\le\delta$, we have 
\begin{align*}
rk\left(\psi\left(a\right)-I_{N}\right) & \le rk\left(\psi\left(a\right)-\ph\left(a\right)\right)+rk\left(\ph\left(a\right)-I_{n}\right)+rk\left(I_{n}-I_{N}\right)\le2\varepsilon_{0},\\
rk\left(\psi\left(b\right)-I_{N}\right) & \le rk\left(\psi\left(b\right)-\ph\left(b\right)\right)+rk\left(\ph\left(b\right)-I_{n}\right)+rk\left(I_{n}-I_{N}\right)\le2\varepsilon_{0}+\delta.
\end{align*}
Since $\psi$ is a representation, any $v\in W\vcentcolon=ker\left(\psi\left(a\right)-I_{N}\right)\cap ker\left(\psi\left(b\right)-I_{N}\right)$
is fixed by any word in $\psi\left(a\right),\psi\left(b\right)$.
We have 
\begin{align*}
dim\left(W\right) & \ge N-rank\left(\psi\left(a\right)-I_{N}\right)-rank\left(\psi\left(b\right)-I_{N}\right)\\
 & \ge N-\left(4\varepsilon_{0}+\delta\right)n.
\end{align*}
 Thus for any $w\in\FF_{2}$,
\begin{align*}
rank\left(\psi\left(w\right)-I_{N}\right) & \le N-dim\left(W\right)\le\left(4\varepsilon_{0}+\delta\right)n
\end{align*}
\[
\implies rk\left(\psi\left(abab^{2}\cdots ab^{t}\right)-I_{N}\right)\le4\varepsilon_{0}+\delta.
\]
But by construction we also have 
\[
rk\left(\ph\left(abab^{2}\cdots ab^{t}\right)-I_{n}\right)=1
\]
\begin{align*}
\implies rk\left(\psi\left(abab^{2}\cdots ab^{t}\right)-I_{N}\right)\ge & 1-rk\left(\psi\left(abab^{2}\cdots ab^{t}\right)-\ph\left(abab^{2}\cdots ab^{t}\right)\right)-rk\left(I_{N}-I_{n}\right)\\
\ge & 1-2\varepsilon_{0},
\end{align*}
and hence
\[
1-2\varepsilon_{0}\le4\varepsilon_{0}+\delta
\]
\[
\implies\varepsilon_{0}\ge\frac{1}{6}-\frac{\delta}{6}=\frac{1}{6}-\frac{1}{6n}.
\]
 
\end{proof}
\begin{proof}[proof of Theorem \ref{thm:free}]
 Let $\pi:\Gamma\to\FF_{2}$ be a surjection, and choose $\gamma_{1},\gamma_{2}\in\Gamma$
such that $\pi\left(\gamma_{1}\right)=a$ and $\pi\left(\gamma_{2}\right)=b$.
Define $\tilde{\ph}\vcentcolon=\ph\circ\pi:\Gamma\to GL_{n}\left(F\right)$
where $\ph$ is as in Proposition \ref{prop:free}. Then 
\begin{align*}
rk\left(\tilde{\ph}\left(\gamma_{1}\right)-I_{n}\right) & =0\\
rk\left(\tilde{\ph}\left(\gamma_{2}\right)-I_{n}\right) & \le\frac{1}{n},
\end{align*}
and there is a word $w\in\FF_{2}$ such that 
\[
rk\left(\tilde{\ph}\left(w\left(\gamma_{1},\gamma_{2}\right)-I_{n}\right)\right)=1,
\]
where $w\left(\gamma_{1},\gamma_{2}\right)$ is the word substitution
in $\Gamma$. Arguing similarly to the proof of Proposition \ref{prop:free},
we have for any representation $\psi:\Gamma\to GL_{N}\left(F\right)$,
\[
1-2d^{flex}\left(\tilde{\ph},\psi\right)\le rk\left(w\left(\gamma_{1},\gamma_{2}\right)-I_{N}\right)\le4d^{flex}\left(\tilde{\ph},\psi\right)+\frac{1}{n}
\]
\[
\implies d^{flex}\left(\tilde{\ph},\psi\right)\ge\frac{1}{6}-\frac{1}{6n}.
\]
\end{proof}
\begin{rem}
Choosing $\left\{ \tau\left(k\right)\right\} _{k=1}^{n}$ to be transpositions
$\left(k\quad k+1\right)$ in $S_{n}$ gives another example witnessing
the instability of $\FF_{2}$ in permutations, in addition to \cite{BeckerChapman},
that is not close to representation in any field $F$, and in particular
is bounded from homomorphisms to $S_{n}$.
\end{rem}

\bibliographystyle{plain}
\bibliography{Uniform_rank_metric_stability_of_Lie_algebras_and_groups_2}

\end{document}